\documentclass[12pt, reqno]{amsart}

\usepackage{amssymb}
\usepackage{amsmath}
\usepackage{amscd}
\usepackage{float}
\usepackage{graphicx}
\usepackage{amsfonts}
\usepackage{pb-diagram}
\usepackage[utf8]{inputenc}
\usepackage[T1]{fontenc}

\setlength{\textheight}{22cm}
\setlength{\textwidth}{16cm}
\setlength{\oddsidemargin}{-0.1cm}
\setlength{\evensidemargin}{-0.1cm}

\numberwithin{equation}{section}
\newtheorem{theorem}{Theorem}[section]
\newtheorem{corollary}[theorem]{Corollary}
\newtheorem{lemma}[theorem]{Lemma}
\newtheorem{proposition}[theorem]{Proposition}
\newtheorem{definition}[theorem]{Definition}
\newtheorem{remark}[theorem]{Remark}
\newtheorem{example}[theorem]{Example}

\newtheorem{conjecture}[theorem]{Conjecture}
\begin{document}

\title{A Partition Temperley--Lieb Algebra \newline \tiny{\sc (Work in progress)}}

\author{Jes\'us Juyumaya}
\address{Departamento De Matem\'{a}ticas, Universidad de Valpara\'{i}so, 
Gran Breta\~{n}a 1091, Valpara\'{i}so, Chile.}
\email{juyumaya@uvach.cl, juyumaya@gmail.com}

\thanks{This research has been supported in part by DIUV Grant Nº1-2011. Also, 
 co-financed by the European Union (European Social
 Fund - ESF) and Greek national funds through the Operational Program
 "Education and Lifelong Learning" of the National Strategic Reference
 Framework (NSRF) - Research Funding Program: THALES: Reinforcement of the
 interdisciplinary and/or inter-institutional research and innovation. 
}

\keywords{Temperley--Lieb algebra, Jimbo representation, presentation,  Markov trace.}

\subjclass{57M25, 20C08, 20F36}

\date{April 1, 2013}

\begin{abstract}
We introduce a generalization of the Temperley--Lieb algebra. This generalization 
is defined by adding  certain relations to the  algebra of braids and ties. A specialization  of this last algebra corresponds to one  small Ramified Partition algebra, this fact is the motivation for the name of our generalization.
\end{abstract}
\maketitle

\section*{Introduction}

The Temperley--Lieb algebra appears originally in  Statistical Mechanics  as well as in  Knot theory, quantum groups and  subfactors of von Neumann algebras. This algebra was discovered by Temperley and Lieb  by building  transfer matrices\cite{tl}.  Further, this algebra was rediscovered  by V. Jones\cite{jo83} who used it  in the construction of his polynomial invariant  for knots known as the Jones polynomial\cite{jo}.

\smallbreak

From a purely algebraic point of view, the Temperley--Lieb algebra is a quotient of the Iwahori--Hecke algebra by the  two--sided ideal generated by the Steinberg 
 elements $h_{ij}$ associated to $h_i$'s, where $\vert i-j \vert =1$ and $h_i$'s denote the usual generators of the Iwahori--Hecke algebra, view p. 35\cite{gohajo}. In other words, the Temperley--Lieb algebra can be defined by the usual presentation of the Iwahori--Hecke algebra but  by adding the relations 
 $h_{ij}=0$, for all $\vert i-j \vert =1$. Using this point of view, there are several generalizations of the Temperley--Lieb algebra, e.g.  see   \cite{fan, gojula}. This paper proposes a generalization of the Temperley--Lieb algebra  by adding relations  of  Steinberg types to the {\it algebra of braid and ties}\cite{aj, ry}.

\smallbreak

The algebra of braid and ties   ${\mathcal E}_n(u)$, where $u$ is a parameter and  $n$ denotes a positive integer, can be regarded as a generalization of the Hecke algebra and recently  E. O. Banjo proved that ${\mathcal E}_n(1)$ is isomorphic to a small ramified partition algebra, see Theorem 4.2\cite{ba}.  The possible connexion of the ${\mathcal E}_n(u)$ and the Partition algebras \cite{joPA, mar1} was speculated first by S. Ryom--Hansen\cite{ry}. The algebra ${\mathcal E}_n(u)$ is defined by two sets  of  generators and relations. One set of generators $T_1,\ldots , T_{n-1}$ reflects the braid generators of the Yokonuma--Hecke algebra\cite{yo, th, chda} of type $A$ and the other set of generators  $E_1, \ldots ,E_{n-1}$ reflects the behavior of the monoid $\mathsf{P}_n$ associated to the set  partitions of $\{1, \dots , n\}$. 
Thus, ${\mathcal E}_n(u)$ also can be thought as a $u$--deformation of an  amalgam among the symmetric group on $n$ symbols and $\mathsf{P}_n$.

\smallbreak

In short, in this paper we define and study the {\it Partition Temperley--Lieb algebra}, denoted ${\rm PTL}_n(u)$,  which is defined  by adding to the presentation  of ${\mathcal E}_n(u)$ mentioned above the following relations 
$$
E_iE_jT_{ij}=0 \quad \text{for all}\quad \vert i-j\vert=1
$$
where $T_{ij}$ is  the Steinberg element associated to the $T_i$'s.

\smallbreak

This work is organized as follows.  In  Section 1 we fix notations and  we recall the definition of the  Jimbo representation.  In Section 2 we recall the definition of the algebra ${\mathcal E}_n(u)$, we have included also some results from \cite{ry} which are used in the paper. In Section 3 we construct a non--faithful  tensor representation of the algebra ${\mathcal E}_n(u)$ which is used in Section 4 for the definition of our  Partition  Temperley--Lieb algebra ${\rm PTL}_n(u)$.   The Section  5 shows two presentations of the  ${\rm PTL}_n(u)$. By using one of these presentations we constructed a span linear  set of ${\rm PTL}_n(u)$ which is conjectured that is a basis for the Partition Temperley--Lieb algebra. Finally,  based on  a conjecture that  the algebra  ${\mathcal E}_n(u)$ supports a Markov trace, we prove in Section 7 under which  condition this trace could  pass to  ${\rm PTL}_n(u)$.

\section{Preliminaries}

Along the paper algebra means unital associative algebra, with unity $1$, over the field of rational function $K:={\Bbb C}(\sqrt{u})$ in the variable   $\sqrt{u}$. Consequently, we put  $u = (\sqrt{u})^2$.

\smallbreak

Let  $ {\rm H}_n = {\rm H}_n(u)$ be the Iwahori--Hecke algebra of type $A$, that is,  the algebra presented by generators $1, h_1, \ldots , h_{n-1}$ subject to braid relations among the $h_i$'s and the quadratic relation $h_i^2  =   u + (u-1)h_i$, for all $i$. 

We shall recall the Jimbo representation of the Hecke algebra. Set $V$ the $K$--vector space with basis $\{v_1, v_2\}$. Denotes by ${\bf J}$ the endomorphism of $V\otimes V$ defined through the mapping
$$
\begin{array}{ccl}
{\bf J}(v_i\otimes v_j) & = &  -v_i \otimes v_j \qquad \text{for } \quad i=j\\
{\bf J}(v_1\otimes v_2) & = & (u-1)\,v_1\otimes v_2 + \sqrt{u}\, v_2\otimes v_1  \\
 {\bf J} (v_2\otimes v_1) & = & \sqrt{u}\, v_1\otimes v_2.
\end{array}
$$
The Jimbo representation of ${\rm H}_n$ in $V^{\otimes n}$ is defined  by mapping $h_i\mapsto {\bf J}_i$, where ${\bf J}_i$  acts as the identity, with exception of the factors $i$ and $i+1$, where acts by ${\bf J}$.

\begin{proposition}\label{kerJ}
The kernel of the Jimbo representation is the two--sided ideal generated by $h_{ij}$, where  $\vert i-j\vert =1$ and
$$
h_{ij}:= 1 + h_i + h_j + h_ih_j  + h_jh_i + h_ih_jh_i.
$$
\end{proposition}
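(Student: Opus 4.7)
The plan is to establish both inclusions between the kernel of the Jimbo representation $\rho_J\colon {\rm H}_n \to \mathrm{End}(V^{\otimes n})$ and the two-sided ideal $I$ generated by the Steinberg elements $h_{ij}$ with $|i-j|=1$.

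For the inclusion $I \subseteq \ker\rho_J$, I would first observe that the relation $h_{ij}=0$ involves only the two consecutive indices $i,j=i+1$, and that $\rho_J(h_i)$ acts as the identity outside the $i,i{+}1$ tensor factors. Hence it suffices to check that $\rho_J(h_{12})=0$ in the case $n=3$, i.e.\ that
\[
1 + {\bf J}_1 + {\bf J}_2 + {\bf J}_1{\bf J}_2 + {\bf J}_2{\bf J}_1 + {\bf J}_1{\bf J}_2{\bf J}_1 = 0 \qquad \text{on } V^{\otimes 3}.
\]
This is a routine, if slightly tedious, calculation on the eight-element basis $\{v_{i}\otimes v_j\otimes v_k\}$: one uses the three explicit formulas defining ${\bf J}$ to evaluate the six summands on each basis vector and checks that they cancel in pairs/triples. (Conceptually, the reason the identity holds is that ${\bf J}$ has only the eigenvalues $-1$ and $u$ on $V^{\otimes 2}$, so the antisymmetrizer-type combination in $h_{ij}$ maps into $\Lambda^3 V = 0$ since $\dim V = 2$.)

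For the reverse inclusion I would argue by dimension. Let $I_0$ be the two-sided ideal of ${\rm H}_n$ generated by the Steinberg elements; by the first step $\rho_J$ factors through ${\rm H}_n/I_0$, and this quotient is by definition the Temperley--Lieb algebra ${\rm TL}_n$, whose dimension is the Catalan number $C_n=\frac{1}{n+1}\binom{2n}{n}$ (a standard fact with a diagrammatic basis of non-crossing matchings on $2n$ points). It is therefore enough to show that
\[
\dim\,\rho_J({\rm H}_n) \ge C_n,
\]
for then $\dim\rho_J({\rm H}_n)=\dim({\rm H}_n/I_0)$, which forces $\ker\rho_J = I_0 = I$. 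The lower bound follows from the classical Schur--Weyl/Jimbo duality: the image of ${\rm H}_n$ in $\mathrm{End}(V^{\otimes n})$ equals the centraliser of the quantum group $U_q(\mathfrak{sl}_2)$ acting on $V^{\otimes n}$, and a direct decomposition of $V^{\otimes n}$ into $U_q(\mathfrak{sl}_2)$-isotypic components gives this centraliser dimension exactly $C_n$. If one prefers to avoid invoking quantum Schur--Weyl, an alternative is to exhibit a family of $C_n$ products of the $h_i$'s (indexed by non-crossing pair partitions, or equivalently by fully commutative elements of the symmetric group) and check by an explicit evaluation on suitable tensors $v_{i_1}\otimes\cdots\otimes v_{i_n}$ that their images are linearly independent in $\mathrm{End}(V^{\otimes n})$.

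The main obstacle is the second inclusion: the first direction is a mechanical local check, but the dimension lower bound on the image requires either an appeal to the quantum Schur--Weyl theorem or the construction of an explicit Catalan-indexed family of operators together with a linear-independence argument. The former route is cleanest and is the one I would adopt.
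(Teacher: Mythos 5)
The paper does not actually prove Proposition \ref{kerJ}: it is stated as a known fact (it goes back to Jimbo's work and to the standard treatment in Goodman--de la Harpe--Jones), and the paper immediately uses it to \emph{define} ${\rm TL}_n$ as the quotient by that kernel. So there is no in-paper argument to compare against; judged on its own, your two-inclusion strategy is the standard and correct one. The local check that $\rho_J(h_{12})=0$ on $V^{\otimes 3}$ (or the eigenvalue argument: left multiplication by $h_i$ fixes $h_{ij}$ up to the factor $u$, so the image of $\rho_J(h_{ij})$ lies in the simultaneous $u$-eigenspace of ${\bf J}_1,{\bf J}_2$, the $q$-analogue of $\Lambda^3V$, which vanishes as $\dim V=2$) settles $I\subseteq\ker\rho_J$, and quantum Schur--Weyl duality gives $\dim\rho_J({\rm H}_n)=\sum_{\lambda}(f^\lambda)^2=C_n$, the sum over two-row partitions. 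The one point to phrase carefully is the dimension count for the quotient: citing ``$\dim{\rm TL}_n=C_n$'' as a black box risks circularity, since that equality is itself usually proved via this very representation or the diagram calculus. What your argument actually needs is only the \emph{upper} bound $\dim({\rm H}_n/I_0)\le C_n$, which follows combinatorially from the fact that modulo the relations $h_{ij}=0$ the algebra is spanned by the $C_n$ monomials $(h_{i_1}\cdots h_{j_1})\cdots(h_{i_k}\cdots h_{j_k})$ indexed by fully commutative elements (the analogue of the reduced words in Section 6 of this paper). Combined with the lower bound $\dim\rho_J({\rm H}_n)\ge C_n$ from Schur--Weyl, the squeeze $C_n\le\dim\rho_J({\rm H}_n)\le\dim({\rm H}_n/I_0)\le C_n$ forces $\ker\rho_J=I_0$, and with that adjustment your proof is complete and non-circular.
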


It is well known that the Temperley--Lieb algebra can be defined as  the quotient 
of the Iwahori--Hecke algebra by the Kernel of  Jimbo representation. Thus, the Temperley--Lieb algebra  can be defined by adding  extra non--redundant relations to the above presentations of the Hecke algebra. More precisely, we have the following definition.
\begin{definition}
The Temperley--Lieb algebra ${\rm TL}_n = {\rm TL}_n(u)$ is the algebra generated by $1, h_1, \ldots , h_{n-1}$ subject to the following relations:
\begin{eqnarray}
h_i^2 & =  & u + (u-1)h_i \qquad \text{ for all $i$}\label{tl1}\\
h_ih_j & =  & h_j h_i \qquad \text{ for $\vert i - j\vert >1$}\label{tl2}\\
h_ih_jh_i & =  & h_jh_i h_j \qquad \text{ for $\vert i - j\vert =1$}\label{tl3}\\
h_{ij} &  = & 0\qquad \text{ for $\vert i - j\vert =1$}.\label{tl4}
\end{eqnarray}
\end{definition}

It is well known that the  dimension of ${\rm TL}_n$ is the $n$th Catalan number $C_n: = \frac{1}{n+ 1}$\, $2n\choose{n}$ \cite{jo83} and that 
${\rm TL}_n$ has a presentation (reduced) with idempotents generators. Indeed, making 
$$
f_i := \frac{1}{1+u}(1+h_i)
$$
 we have the following proposition.
\begin{proposition}\label{pretl}
 ${\rm TL}_n$ can be presented by generators $1, f_1, \ldots ,f_{n-1}$ satisfying the following relations
 \begin{eqnarray}
f_i^2 & = & f_i \qquad \text{ for all $i$}\label{pretl1}\\
f_if_j & =  & f_j f_i  \qquad \text{ for $\vert i - j\vert >1$}\label{pretl2}\\
f_if_jf_i & =  & \frac{u}{(1+u)^ 2}  f_i\qquad \text{ for $\vert i - j\vert =1$}. \label{pretl3}
\end{eqnarray}
\end{proposition}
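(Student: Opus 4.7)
The plan is to exhibit a mutually inverse pair of algebra homomorphisms between $\mathrm{TL}_n$ and the algebra $A$ presented by generators $1, f_1, \dots, f_{n-1}$ subject to \eqref{pretl1}--\eqref{pretl3}. Since $1+u$ is invertible in $K$, the substitutions $f_i = \tfrac{1}{1+u}(1+h_i)$ and $h_i = (1+u)f_i - 1$ are mutually inverse on generators, so it suffices to verify that each set of relations is implied by the other.

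For the forward direction, I would check that the elements $f_i := \tfrac{1}{1+u}(1+h_i) \in \mathrm{TL}_n$ satisfy \eqref{pretl1}--\eqref{pretl3}. Idempotency follows from \eqref{tl1}: $(1+h_i)^2 = 1 + 2h_i + u + (u-1)h_i = (1+u)(1+h_i)$. Commutation is immediate from \eqref{tl2}. For \eqref{pretl3}, I would expand
\[
(1+h_i)(1+h_j)(1+h_i) = h_{ij} + (1+u)(1+h_i) - (1 + u)
\]
(where the bookkeeping uses $h_{ij} = 1 + h_i + h_j + h_ih_j + h_jh_i + h_ih_jh_i$ and $h_i^2 = u + (u-1)h_i$); then \eqref{tl4} kills $h_{ij}$ and one reads off $f_if_jf_i = \tfrac{u}{(1+u)^2}f_i$.

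For the reverse direction, I would work in $A$ and set $h_i := (1+u)f_i - 1$. The quadratic relation \eqref{tl1} drops out of \eqref{pretl1} by direct expansion, and \eqref{tl2} from \eqref{pretl2}. The key computation is to expand $h_ih_jh_i$ using $h_i = (1+u)f_i - 1$; all cubic terms collapse to $(1+u)^3 f_if_jf_i = u(1+u) f_i$ by \eqref{pretl3}, and after collecting one finds the symmetric expression
\[
h_ih_jh_i = (1+u)(f_i+f_j) - (1+u)^2(f_if_j + f_jf_i) - 1,
\]
which is visibly invariant under $i \leftrightarrow j$, giving \eqref{tl3}. The same expansion, substituted into $h_{ij}$, produces a telescoping cancellation that yields $h_{ij} = 0$, hence \eqref{tl4}.

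Together, these two computations show that $f_i \mapsto \tfrac{1}{1+u}(1+h_i)$ and $h_i \mapsto (1+u)f_i - 1$ extend to algebra homomorphisms $A \to \mathrm{TL}_n$ and $\mathrm{TL}_n \to A$, respectively, which are mutually inverse on generators, hence on the whole algebras. The only real obstacle is the bookkeeping in expanding $(1+h_i)(1+h_j)(1+h_i)$ and in verifying that $h_{ij}$ vanishes identically in $A$; both are finite and mechanical, but one must be careful with signs and with the coefficient $u/(1+u)^2$ on the cubic term.
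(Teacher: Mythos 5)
Your two-way Tietze-transformation argument is the right (and standard) one; the paper itself states Proposition \ref{pretl} without proof, so there is nothing to compare against, and your reverse direction is correct as written: with $h_i=(1+u)f_i-1$ one indeed gets $h_ih_jh_i=(1+u)(f_i+f_j)-(1+u)^2(f_if_j+f_jf_i)-1$, which is symmetric in $i,j$, and substituting into $h_{ij}$ makes every coefficient cancel, giving $h_{ij}=0$. However, the displayed identity in your forward direction is wrong: expanding gives $(1+h_i)(1+h_j)(1+h_i)=h_{ij}+h_i+h_i^2=h_{ij}+u(1+h_i)$, using $h_i+h_i^2=h_i+u+(u-1)h_i=u(1+h_i)$, whereas your formula $h_{ij}+(1+u)(1+h_i)-(1+u)=h_{ij}+(1+u)h_i$ does not equal this, and if taken literally it would yield $f_if_jf_i=\tfrac{1}{(1+u)^2}\cdot\tfrac{1}{1+u}(1+u)h_i$, which is not a multiple of $f_i$ and contradicts the conclusion you then state. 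With the corrected identity, \eqref{tl4} kills $h_{ij}$ and $f_if_jf_i=\tfrac{u}{(1+u)^3}(1+h_i)=\tfrac{u}{(1+u)^2}f_i$ as required, so the slip is local and the proof goes through once it is fixed.
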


\smallbreak

By virtue  Proposition \ref{kerJ}, the Jimbo representation of the Iwahori--Hecke algebra defines a representation of the Temperley--Lieb algebra. In terms of the 
generators $f_i$'s, this representation, called also the Jimbo representation, acts on  $V^{\otimes n}$   by mapping $f_i\mapsto {\bf F}_i$. The  endomorphism ${\bf F}_i$  acts as the identity, with exception of the factors $i$ and $i+1$, where acts by ${\bf F}\in {\rm End}(V^{\otimes 2})$,
$$
\begin{array}{ccl}
{\bf F} (v_i\otimes v_j) & = &  0 \qquad \text{for } \quad i=j\\
{\bf F} (v_1\otimes v_2) & = & (u + 1)^{-1}(u\, v_1\otimes v_2 + \sqrt{u}\, v_2\otimes v_1)  \\
 {\bf F} (v_2\otimes v_1) & = & (u + 1)^{-1} (\sqrt{u}\,v_1\otimes v_2  + v_2\otimes v_1 ).
\end{array}
$$

\section{The algebra of braids and ties}

Let $\bf n$ be the poset $\{1, \dots , n\}$. A partition of $\bf n$ is a collection of pairwise disjoint subposets (called parts) whose union is equal to $\bf n$.  We shall denote $\mathsf{P}_n$ the set 
 formed by the partitions of $\bf n$. The cardinal $b_n$ of $\mathsf{P}_n$ is known as the $n$th Bell number.
 
\smallbreak 

Let $I\in\mathsf{P}_n$, an arc 
$i\frown  j $ 
of $I$ is an ordered pair $(i,j)\in \{1, \dots , n\}\times \{1, \dots , n\}$ such that 
\begin{enumerate}
\item $i<j$
\item $i$ and $j$ are in the same part of $I$
\item if $k$ is in the same part as $i$ and $i<k\leq j$, then $k=j$
\end{enumerate}

In other words the arcs are pairs of adjacent elements in each part of $I$. Therefore 
the elements of  $ \mathsf{P}_n$ can be represented by a  graph with  vertices  $\{1, \dots , n\}$ and whose edge connecting the vertices  $i$ and $j$ if and only if  $i\frown  j $ is an arc of $I$.  For example, for $n=3$ we have 
$$
\{\{1,2\}, \{3\}\} \qquad\text{is represented by} \qquad \begin{picture}(50,25)
\put(-2,2){$\bullet$}
\put(23,2){$\bullet$}
\put(-1, -5){\tiny{1}}
\put(24, -5){\tiny{2}}
\qbezier(0,5)(12,25)(25,5)
\put(-2,2){$\bullet$}
\put(48,2){$\bullet$}
\put(49, -5){\tiny{3}}
\end{picture}
$$
and so on. 

\smallbreak

The set  $ \mathsf{P}_n$ can  be regarded naturally as a poset, where the partial order  $\preceq$, is defined by:
$ I= (I_1, \dots I_k) \preceq J=(J_1, \dots J_l)$ if and only if each $J_i$ is a union of certain $I_i$'s. By using  $\preceq$ we give  to $ \mathsf{P}_n$ a  structure of commutative monoid by defining the product $I\ast J$, of $I$ with $J$, as the minimum  element of the poset  $ \mathsf{P}_n$  containing $I$ and $J$.
Clearly the unity is  $\{\{1\}, \{2\}, \dots , \{n\}\}\}$
which is represented by \quad $\begin{picture}(80,9)
\put(-2,2){$\bullet$}
\put(23,2){$\bullet$}
\put(73, 2){$\bullet$}
\put(-1, -5){\tiny{$1$}}
\put(24, -5){\tiny{$2$}}
\put(74, -5){\tiny{$n$}}
\put(40,3){\dots}
\end{picture}$. The monoid  $ \mathsf{P}_n$ 
 is generated by the unity and the elements:
$$
\begin{picture}(80,25)
\put(-2,2){$\bullet$}
\put(23,2){$\bullet$}
\put(48,2){$\bullet$}
\put(73, 2){$\bullet$}
\put(-1, -5){\tiny{$1$}}
\put(24, -5){\tiny{$i$}}
\put(44, -5){\tiny{$i+1$}}
\put(74, -5){\tiny{$n$}}
\qbezier(25,5)(37,25)(50,5)
\put(7,3){\dots}
\put(56,3){\dots}
\end{picture} \qquad \text{for all $1\leq i\leq n$}
$$ 

The  Hasse diagram for $ \mathsf{P}_3$  is:
$$
\begin{picture}(200,110)
\put(88,97){$\bullet$}
\put(113,97){$\bullet$}
\put(138,97){$\bullet$}
\put(89, 90){\tiny{1}}
\put(114, 90){\tiny{2}}
\put(139, 90){\tiny{3}}
\qbezier(115, 100)(127,120)(140,100)
\qbezier(90, 100)(102,120)(115,100)


\put(-2,47){$\bullet$}
\put(23,47){$\bullet$}
\put(48,47){$\bullet$}
\put(-1,40){\tiny{1}}
\put(24, 40){\tiny{2}}
\put(49, 40){\tiny{3}}
\qbezier(0, 50)(12,70)(25,50)


\put(88,47){$\bullet$}
\put(113,47){$\bullet$}
\put(138,47){$\bullet$}
\put(89, 40){\tiny{1}}
\put(114, 40){\tiny{2}}
\put(139, 40){\tiny{3}}
\qbezier(115, 50)(127,70)(140,50)


\put(178,47){$\bullet$}
\put(203,47){$\bullet$}
\put(228,47){$\bullet$}
\put(179, 40){\tiny{1}}
\put(204, 40){\tiny{2}}
\put(229, 40){\tiny{3}}
\qbezier(180, 50)(205,70)(230,50)


\put(88,-3){$\bullet$}
\put(113,-3){$\bullet$}
\put(138,-3){$\bullet$}
\put(89, -10){\tiny{1}}
\put(114, -10){\tiny{2}}
\put(139, -10){\tiny{3}}

\put(117,13){\line(0,1){18}}
\put(117,63){\line(0,1){18}}

\put(80,13){\line(-1,1){18}}
\put(150,13){\line(1,1){18}}

\put(60,63){\line(1,1){18}}
\put(170,63){\line(-1,1){18}}

\end{picture}
$$
\smallbreak

And we have, for  example:
$$
\begin{picture}(221,25)
\put(-2,2){$\bullet$}
\put(23,2){$\bullet$}
\put(48,2){$\bullet$}
\put(-1, -5){\tiny{1}}
\put(24, -5){\tiny{2}}
\put(49, -5){\tiny{3}}
\qbezier(0,5)(12,25)(25,5)
\put(63, 5){$\ast$}
\put(78,2){$\bullet$}
\put(104,2){$\bullet$}
\put(129,2){$\bullet$}
\put(79, -5){\tiny{1}}
\put(104, -5){\tiny{2}}
\put(129, -5){\tiny{3}}
\qbezier(105,5)(118,25)(133,5)
\put(155, 5){$=$}
\put(178,2){$\bullet$}
\put(204,2){$\bullet$}
\put(229,2){$\bullet$}
\put(179, -5){\tiny{1}}
\put(204, -5){\tiny{2}}
\put(229, -5){\tiny{3}}
\qbezier(180,5)(193,25)(208,5)
\qbezier(208,5)(221,25)(232,5)
\end{picture}
$$

As usual we denote $S_n$ the symmetric group on symbols and we  denote $s_i$ the transposition 
$(i,\, i+1)$. 

\smallbreak

For $I = \{ I_1, \dots , I_m\}\in \mathsf{P}_n$ and $w\in S_n$ we define $wI = \{wI_1,\ldots , wI_m\}$, where $wI_i$ is the subposet of $\bf n$ obtained by applying  $w$ to the elements of $I_i$.

\begin{definition}\rm 
We denote ${\mathcal E}_n={\mathcal E}_n(u)$ the algebra generated by $1, T_1, \ldots, T_{n-1}, E_1, \ldots , E_{n-1}$ satisfying the following relations:
\begin{eqnarray}
T_iT_j & = &  T_jT_i \qquad \text{for $\vert i-j \vert >1$}\label{E1}\\
T_iT_jT_i & = & T_jT_iT_i \qquad \text{ for $\vert i  -  j\vert =1$}\label{E2} \\
T_i^2 & = & 1 + (u-1) E_{i}  \left( 1+  T_i\right)\qquad \text{ for all $i$}\label{E3}\\ 
E_iE_j & = &   E_j E_i \qquad \text{ for all $i,j$}\label{E4}\\
E_i^2 & = & E_i \qquad \text{ for all $i$}\label{E5}  \\
E_iT_j & = &   T_j E_i \qquad \text{ for $\vert i  -  j\vert >1$}\label{E6}\\
E_iT_i & = &   T_i E_i \qquad \text{ for all $i$}\label{E7}\\
E_iE_jT_i  & = &  T_iE_iE_j \quad = \quad E_jT_iE_j\qquad \text{ for $\vert i  -  j\vert =1$}\label{E8}\\
E_iT_jT_i & = &   T_jT_i E_j \qquad \text{ for $\vert i - j\vert =1$}\label{E9}.
\end{eqnarray}

\end{definition}

If $w= s_{i_1}\cdots s_{i_k}\in S_n$ is reduced form for $w$, we write  $T_w := T_{i_1}\cdots T_{i_k}$ (this is a possible debt to a well known result of H. Matsumoto).

For $i<j$, we define $E_{ij}$ as
 $$
 E_{ij} = \left\{\begin{array}{ll}
 E_i & \text{for} \quad j = i +1\\ 
 T_i \cdots T_{j-2}E_{j-1}T_{j-2}^{-1}\cdots T_{i}^{-1}& \text{otherwise}
 \end{array}\right.
 $$

 For any $J=\{i_1, i_2, \ldots , i_k\}$ subposet of $\bf n$ we define $E_J=1$ if $k=1$ and 
$$
E_J := E_{i_1i_2}E_{i_2i_3}\cdots E_{i_{k-1}i_k} \quad \text{for}\quad k>1
$$
Note that $E_{\{i,j\}} = E_{ij}$. Also we note that 
in  Lemma 4\cite{ry} it is proved that $E_J$ can be computed as 
 $$
E_{J} = \prod_{j\in J,\, j\not= i_0}E_{i_0j}\qquad (i_0:= \text{min}\{i\,;\,i\in J\})
 $$
 
For $I = \{I_1, \ldots , I_m\}\in \mathsf{P}_n$,  we define  
 $E_I$ as
 $$
 E_I = \prod_{k}E_{I_k}
 $$

The Corollary 2\cite{ry} implies 
the following proposition.

\begin{proposition}
The mapping $E_i\mapsto 
\begin{picture}(80,25)
\put(-2,2){$\bullet$}
\put(23,2){$\bullet$}
\put(48,2){$\bullet$}
\put(73, 2){$\bullet$}
\put(-1, -5){\tiny{$1$}}
\put(24, -5){\tiny{$i$}}
\put(44, -5){\tiny{$i+1$}}
\put(74, -5){\tiny{$n$}}
\qbezier(25,5)(37,25)(50,5)
\put(7,3){\dots}
\put(56,3){\dots}
\end{picture}
$
defines a 
monoid  isomorphism between the monoid  generated by $1, E_1, \dots , E_{n-1}$ and  $\mathsf{P}_n$.
\end{proposition}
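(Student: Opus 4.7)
The plan is to verify that the assignment $\varphi\colon E_i\mapsto a_i$, where $a_i\in\mathsf{P}_n$ denotes the partition whose unique non-singleton part is $\{i,i+1\}$, extends to a monoid isomorphism from the submonoid $M\subset\mathcal{E}_n$ generated by $1, E_1,\ldots,E_{n-1}$ onto $\mathsf{P}_n$. The substantive input will be Corollary~2 of \cite{ry}, which supplies the distinctness of the normal forms $E_I$ in $\mathcal{E}_n$.

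First, well-definedness. The only relations from the presentation of $\mathcal{E}_n$ that involve only the letters $E_i$ are the commutativity \eqref{E4} and the idempotency \eqref{E5}, and both hold for the $a_i$'s in $\mathsf{P}_n$ since the product $\ast$ is the commutative, idempotent join of partitions. Consequently $\varphi$ extends to a monoid homomorphism on $M$. Surjectivity is then immediate from the observation recalled just before the proposition that $\mathsf{P}_n$ is generated as a monoid by the unit together with the $a_i$'s.

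For injectivity I would use \eqref{E4} and \eqref{E5} to bring an arbitrary word $E_{i_1}E_{i_2}\cdots E_{i_k}\in M$ into the normal form $E_I$, where $I = a_{i_1}\ast\cdots\ast a_{i_k}\in\mathsf{P}_n$: after sorting and deleting repeated factors, the identity $E_J = \prod_{j\in J,\,j\neq i_0} E_{i_0 j}$ from Lemma~4 of \cite{ry}, applied block-by-block to the partition $I$, rewrites the reduced product as $\prod_\ell E_{I_\ell} = E_I$. Corollary~2 of \cite{ry} then asserts that the family $\{E_I : I\in\mathsf{P}_n\}$ is linearly independent, and in particular pairwise distinct, in $\mathcal{E}_n$; this supplies the set-theoretic inverse $I\mapsto E_I$ of $\varphi$. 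The hard step is precisely this last distinctness: everything preceding it is formal bookkeeping inside the commutative-idempotent monoid generated by the $E_i$'s, but ruling out accidental coincidences $E_I = E_{I'}$ for $I\neq I'$ requires the structural analysis of $\mathcal{E}_n$ carried out in \cite{ry}.
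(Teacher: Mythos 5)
Your proposal is correct and follows essentially the same route as the paper, which offers no proof beyond the remark that Corollary~2 of \cite{ry} implies the proposition: the key input in both cases is the distinctness of the elements $E_I$ established by Ryom--Hansen (equivalently, the linear independence contained in the basis theorem), and your verification of well-definedness, surjectivity, and the reduction of an arbitrary word in the $E_i$'s to the normal form $E_I$ is exactly the routine bookkeeping the paper leaves implicit.
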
 
 
\begin{proposition}[Corollary 1\cite{ry}]
For $I \in \mathsf{P}_n$  and $w\in S_n$, we have
$$
T_w E_I T_w^{-1} = E_{wI}.
$$
\end{proposition}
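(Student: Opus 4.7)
The plan is to establish the identity by a two-stage reduction followed by a case analysis at the level of simple transpositions acting on arc generators $E_{ab}$.

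First, since the $T_i$ satisfy braid and commutation relations ((\ref{E1}) and (\ref{E2})), $T_w$ is well-defined on reduced expressions. Writing $w = s_{i_1}\cdots s_{i_\ell}$ in reduced form so that $T_w = T_{i_1}\cdots T_{i_\ell}$, we have
$$T_w E_I T_w^{-1} \;=\; T_{i_1}\bigl(T_{w'} E_I T_{w'}^{-1}\bigr) T_{i_1}^{-1}, \qquad w' := s_{i_2}\cdots s_{i_\ell},$$
so an induction on $\ell(w)$ reduces everything to the claim for simple transpositions: $T_i E_I T_i^{-1} = E_{s_i I}$ for every $I\in\mathsf{P}_n$ and every $i$.

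Second, I would exploit the multiplicative structure of $E_I$. Writing $I = \{I_1,\ldots,I_m\}$ and using $E_I = \prod_k E_{I_k}$ together with the pairwise commutation of the factors (the parts $I_k$ being disjoint), the claim localises to a single subposet. Combined with the formula $E_J = \prod_{j \in J,\, j\neq i_0} E_{i_0 j}$ of Lemma 4 of \cite{ry}, the problem is further reduced to the pair case
$$T_i E_{ab} T_i^{-1} \;=\; E_{\{s_i(a),\,s_i(b)\}} \qquad (a < b).$$

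The remaining case analysis splits according to how $\{a, b\}$ meets $\{i, i+1\}$. If $\{a,b\}\cap\{i,i+1\} = \emptyset$, then $s_i$ fixes $\{a,b\}$ and one checks that $T_i$ commutes with $E_{ab}$ by expanding $E_{ab}$ via its definition and applying (\ref{E1}), (\ref{E2}), (\ref{E6}). If $\{a,b\} = \{i, i+1\}$, then $E_{ab} = E_i$ and the claim is precisely (\ref{E7}). Finally, in the overlap case, say $a = i$, $b > i+1$, one uses the definition $E_{ib} = T_i E_{i+1,b} T_i^{-1}$ to rewrite
$$T_i E_{ib} T_i^{-1} \;=\; T_i^2\, E_{i+1,b}\, T_i^{-2},$$
so the statement becomes: $T_i^2$ centralises $E_{i+1,b}$. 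Expanding $T_i^2$ via the quadratic relation (\ref{E3}) reduces this further to showing that $E_i$ and $E_i T_i$ commute with $E_{i+1, b}$, a fact that follows by induction on $b-i$ using (\ref{E4}), (\ref{E8}) and (\ref{E9}) applied at the endpoint $i+1$. The mirror case $a<i$, $b = i+1$ is handled symmetrically.

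The main obstacle is precisely this overlap case. The definition gives essentially for free the identity $T_i^{-1} E_{ib} T_i = E_{i+1,b}$, but the proposition requires the conjugate in the opposite direction. Since (\ref{E3}) is a non-scalar quadratic (in contrast to the Hecke algebra, where the analogous statement is automatic), bridging this gap genuinely requires that the ``extra'' term $E_i(1+T_i)$ commute with $E_{i+1,b}$. Relations (\ref{E8}) and (\ref{E9}) provide the algebraic input making this possible; conceptually, they encode the fact that merging the arc $\{i,i+1\}$ into the arc $\{i+1,b\}$ is symmetric in the two arcs, which is exactly the coherence needed for conjugation by $T_w$ to descend to an action by $S_n$ on $\mathsf{P}_n$.
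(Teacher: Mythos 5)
The paper does not actually prove this statement: it is imported verbatim as Corollary 1 of \cite{ry}, so there is no in-paper argument to compare against, and your plan is essentially the argument of \cite{ry} itself --- reduce by induction on length to a single $T_i$, localise via $E_I=\prod_k E_{I_k}$ and the star formula $E_J=\prod_{j\neq i_0}E_{i_0j}$ (legitimate because $s_i(\min J)=\min(s_iJ)$, so the conjugated star product is again the star product for $s_iJ$), then run a case analysis on $\{a,b\}\cap\{i,i+1\}$ whose crux is that $T_i^{2}$ centralises $E_{i+1,b}$, which via (\ref{E3}) comes down to $E_i$ and $E_iT_i$ commuting with $E_{i+1,b}$. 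That identification of the crux is correct and the relations you cite are the right ones, but two spots in the sketch need shoring up. First, your ``disjoint'' case silently includes the straddling subcase $a<i<i+1<b$, where $T_i$ does not far-commute with the letters of $E_{ab}=T_a\cdots T_{b-2}E_{b-1}T_{b-2}^{-1}\cdots T_a^{-1}$; there one needs the Matsumoto-type identity $T_i(T_a\cdots T_{b-2})=(T_a\cdots T_{b-2})T_{i-1}$ followed by (\ref{E6}) --- you do list (\ref{E2}) among the tools, so this is fillable, but it should be said. Second, the ``induction on $b-i$'' for the commutation of $E_i$ and $E_iT_i$ with $E_{i+1,b}$ does not close if run on the definitional recursion $E_{i+1,b}=T_{i+1}E_{i+2,b}T_{i+1}^{-1}$, since $E_i$ and $T_i$ do not commute with $T_{i+1}$; the fix is the endpoint flip you allude to, namely (\ref{E9}) gives $T_{b-2}E_{b-1}T_{b-2}^{-1}=T_{b-1}^{-1}E_{b-2}T_{b-1}$, hence $E_{i+1,b}=T_{b-1}^{-1}\cdots T_{i+2}^{-1}E_{i+1}T_{i+2}\cdots T_{b-1}$, after which $E_i$ and $T_i$ pass through every conjugating letter and everything reduces to the base identity $E_iT_iE_{i+1}=T_iE_iE_{i+1}=E_iE_{i+1}T_i=E_{i+1}E_iT_i$ from (\ref{E7}), (\ref{E8}) and (\ref{E4}). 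With these two points made explicit, and the remaining overlap subcases $a=i+1$ and $b=i$ written out (they are handled by the same two mechanisms), your proof is complete and is, for practical purposes, the proof the paper is relying on.
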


\begin{theorem}[Corollary 3\cite{ry}]\label{basEn}
The set ${\Bbb S}_n:=\{E_I T_w \,; \,w\in S_n,\, I\in  \mathsf{P}_n\}$ is a linear basis of 
${\mathcal E}_n$. Hence the dimension of ${\mathcal E}_n$ is $b_nn!$.

\end{theorem}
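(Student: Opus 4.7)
The plan is to establish, in two independent steps, that ${\Bbb S}_n$ spans ${\mathcal E}_n$ and that its elements are linearly independent; combining these gives a basis of cardinality $|S_n|\cdot|\mathsf{P}_n| = n!\,b_n$.

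For spanning, I would show that the $K$-linear span of ${\Bbb S}_n$ is closed under right multiplication by each generator (and contains $1$), hence equals ${\mathcal E}_n$. Right multiplication of $E_I T_w$ by $E_j$ is absorbed using the conjugation identity $T_w E_j T_w^{-1} = E_{w\{j,j+1\}}$ from the preceding Proposition, which rewrites $E_I T_w E_j = E_I E_{w\{j,j+1\}} T_w$, after which $E_I E_{w\{j,j+1\}}$ collapses into a single $E_{I'}$ via the monoid isomorphism with $\mathsf{P}_n$ and the relations \eqref{E4}, \eqref{E5}. Right multiplication by $T_i$ is immediate when $\ell(w s_i) > \ell(w)$, producing $E_I T_{w s_i}$ by Matsumoto's lemma. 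Otherwise $w = w' s_i$ with $\ell(w') = \ell(w) - 1$, and the quadratic relation \eqref{E3} gives
$$
T_w T_i \;=\; T_{w'} T_i^2 \;=\; T_{w'} + (u-1)\,E_{w'\{i,i+1\}} T_{w'} + (u-1)\,E_{w'\{i,i+1\}} T_w,
$$
each summand already in the form $E_J T_x$. This proves $\dim_K {\mathcal E}_n \le b_n n!$.

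For linear independence, I would construct an ${\mathcal E}_n$-module of dimension exactly $b_n n!$ on which the operators $E_I T_w$ are linearly independent. The natural candidate is the free $K$-module $M$ with basis indexed by pairs $(w,I)\in S_n\times \mathsf{P}_n$, where $E_i$ acts diagonally on the $\mathsf{P}_n$-coordinate by multiplying with the generator of $\mathsf{P}_n$ corresponding to the part $\{i,i+1\}$, and $T_i$ acts by a Jimbo-type formula that permutes the $S_n$-coordinate via $s_i$ together with quadratic correction terms prescribed by \eqref{E3}. After verifying that this action respects all nine defining relations, one checks that $E_I T_w$ sends the vector indexed by (identity of $S_n$, unit of $\mathsf{P}_n$) to a vector with leading term $(w, I')$ for an explicit $I'$ depending injectively on $(w,I)$; ordering the basis by length of $w$, this is a triangular change of basis certifying linear independence.

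The main obstacle is precisely this second step: making the assignments compatible with the mixed relations \eqref{E8} $E_iE_jT_i = T_iE_iE_j = E_jT_iE_j$ and \eqref{E9} $E_iT_jT_i = T_jT_iE_j$ for $|i-j|=1$ requires that the $T_i$-action intertwine delicately with the monoid action on the $\mathsf{P}_n$-coordinate, with the correction terms on descents prescribed so that both the braid relations and the partition-compatibility hold simultaneously. A cleaner route, available via the theorem of Banjo cited in the Introduction, is to specialize $u = 1$: Banjo identifies ${\mathcal E}_n(1)$ with a small ramified partition algebra of dimension $b_n n!$, and since specialization cannot increase dimension, combining this lower bound with the spanning bound $\dim_K {\mathcal E}_n \le b_n n!$ forces equality, which in turn forces ${\Bbb S}_n$ to be $K$-linearly independent.
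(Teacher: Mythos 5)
First, a point of comparison: the paper offers no proof of this statement at all --- it is imported verbatim as Corollary 3 of Ryom--Hansen \cite{ry} --- so you are in effect reproving a cited result. Your spanning half is correct and is essentially the standard argument: the span of ${\Bbb S}_n$ is closed under right multiplication by $E_j$ via $T_wE_jT_w^{-1}=E_{w\{j,j+1\}}$ together with the monoid multiplication of the $E_I$'s, and under right multiplication by $T_i$ via Matsumoto in the ascent case and the quadratic relation (\ref{E3}) in the descent case; this yields $\dim_K{\mathcal E}_n\le b_n\,n!$.

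The genuine gap is in the linear independence half, and neither of your routes closes it. Route A (a $b_n n!$--dimensional module on which the $E_IT_w$ act independently) is exactly what \cite{ry} does, via a faithful tensor representation on $V^{\otimes n}$ with $\dim V=n^2$ --- the representation whose non--faithful variant appears in Section 3 of this paper --- but you do not verify that your proposed action satisfies (\ref{E8}) and (\ref{E9}), and you yourself name this verification as the main obstacle; it is in fact far from clear that an action in which $E_i$ acts diagonally on the $\mathsf{P}_n$--coordinate can satisfy $E_iE_jT_i=E_jT_iE_j$. Route B rests on a false principle: for a finitely generated module over a discrete valuation ring the special fibre dimension is \emph{at least} the generic fibre dimension (torsion can appear at the special point), not at most; so ``specialization cannot increase dimension'' is backwards, and Banjo's $\dim{\mathcal E}_n(1)=b_n n!$ combined with your spanning bound only re-derives the upper bound $\dim_K{\mathcal E}_n\le b_n n!$ that you already have. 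The idea can be repaired: over $R={\Bbb C}[\sqrt{u}]_{(\sqrt{u}-1)}$ the $N=b_n n!$ elements $E_IT_w$ generate the integral form as an $R$--module and reduce to a spanning set of the $N$--dimensional ${\mathcal E}_n(1)$, hence to a basis; a nontrivial $R$--relation, normalized so that not all coefficients lie in the maximal ideal, would then reduce to a nontrivial relation in ${\mathcal E}_n(1)$, a contradiction, giving $R$-- and hence $K$--linear independence. But you did not make this repair, and there is in addition a circularity worry, since Banjo's Theorem 4.2 is itself built on Ryom--Hansen's basis theorem, i.e.\ on the very statement to be proved.
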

  
\section{A tensorial representation for ${\mathcal E}_n$}

In this section we will define a tensorial representation for ${\mathcal E}_n$. This representation  is nothing more than a  variation of  that constructed by S. Ryom--Hansen in Section 3\cite{ry}. We note that, contrary to the representation constructed by Ryom--Hansen, our variation is  a non--faithful  representation. This fact is the key point in order to define the Partition Temperley--Lieb algebra as a quotient of ${\mathcal E}_n$.
 
\smallbreak

 Let $V$ be  the $K$--vector space with basis $\{v_i^r \,;\, 1\leq i,r\leq n\}$, we define the endomorphisms ${\bf E}$ and $\bf{T}$  of $V^{\otimes 2}$
through the following mapping,
$$
{\bf E}(v_{i}^{r}\otimes v_{j}^{s}) := 
\left\{\begin{array}{lr}0 & \qquad \text{for } \quad r\not= s\\
v_{i}^{r}\otimes v_{j}^{s} &  \qquad \text{for } \quad r=s
\end{array}\right. 
$$
$$
{\bf T}(v_{i}^{r}\otimes v_{j}^{s}) := 
\left\{\begin{array}{ll}
-v_{j}^{s}\otimes v_{i}^{r} & \qquad \text{for } \quad r\not=s \\
-v_{i}^{r}\otimes v_{j}^{s}  & \qquad \text{for } \quad r=s, \,  i = j\\
 (u-1)\,v_i^r\otimes v_j^s + \sqrt{u}\, v_j^s\otimes v_i^r & \qquad \text{for } \quad r=s, \, i < j \\
  \sqrt{u}\, v_j^s\otimes v_i^r & \qquad \text{for } \quad r=s, \, i>j
\end{array}\right.
$$

Define now, ${\bf E}_i$ (respectively ${\bf T}_i$) as the endomorphism of $V^{\otimes n}$  that acts as the identity with exception on  the factors $i$ and $i+1$ where acts by ${\bf E}$ (respectively ${\bf T}$).

\begin{theorem}\label{JimboEn}
The mapping $E_i \mapsto {\bf E}_i$, $T_i \mapsto {\bf T}_i$ defines a  representation $\mathcal{J}_n$ of $\mathcal{E}_n$ in $V^{\otimes n}$.
\end{theorem}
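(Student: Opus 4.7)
The plan is to verify each defining relation (\ref{E1})--(\ref{E9}) of $\mathcal{E}_n$ directly on the tensor basis $\{v_{i_1}^{r_1}\otimes\cdots\otimes v_{i_n}^{r_n}\}$ of $V^{\otimes n}$. A first organizational remark reduces most of the work: since $\mathbf{T}_i$ and $\mathbf{E}_i$ act as the identity outside the factors $i,i+1$, any relation involving only generators of indices $i,j$ with $|i-j|>1$ reduces to the commutation of operators with disjoint supports, so (\ref{E1}), (\ref{E4}) for $|i-j|>1$, and (\ref{E6}) are automatic. For the same reason, the remaining relations reduce to identities on $V^{\otimes 2}$ (those with a single index $i$) or on $V^{\otimes 3}$ (those with two adjacent indices $i,j$).

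The single-index relations (\ref{E3}), (\ref{E5}), (\ref{E7}) I would handle as follows. Relation (\ref{E5}) is immediate: $\mathbf{E}$ is the projector onto the span of vectors $v_i^r\otimes v_j^s$ with $r=s$. For (\ref{E7}), I would split the basis of $V\otimes V$ into the pieces $r\neq s$ (where both sides vanish, since $\mathbf{E}$ kills the vector and $\mathbf{T}$ preserves the subspace $\{r\neq s\}$) and $r=s$ (where $\mathbf{E}$ acts as the identity). For the quadratic relation (\ref{E3}) I would check the four cases: $r\neq s$ (the right-hand side collapses to the identity, and $\mathbf{T}^{2}=\mathrm{id}$ since $\mathbf{T}$ acts as $-\mathrm{swap}$); $r=s$ with $i=j$ (both sides equal $v_i^r\otimes v_i^r$); and $r=s$ with $i<j$ or $i>j$, which are precisely the calculations that prove the analogous quadratic relation for the Jimbo representation.

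The braid-type relations (\ref{E2}), (\ref{E8}), (\ref{E9}) reduce to identities on $V^{\otimes 3}$, and here I would stratify the basis $v_i^r\otimes v_j^s\otimes v_k^t$ by the equalities among the upper indices $r,s,t$. On the stratum $r=s=t$, the action of $\mathbf{T}_{i}$ restricted to that stratum agrees with the classical Jimbo operator, so (\ref{E2}) on this stratum follows from the braid relation for the Hecke algebra, and (\ref{E8}), (\ref{E9}) reduce to easy computations there. On the remaining strata (all distinct; $r=s\neq t$; $r=t\neq s$; $s=t\neq r$), $\mathbf{T}$ acts on at least one adjacent pair of tensor slots as $-\mathrm{swap}$, so the left- and right-hand sides can be computed explicitly, checking in particular that in (\ref{E8}) and (\ref{E9}) the strata on which some $\mathbf{E}$ vanishes indeed produce zero on both sides.

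The main obstacle is the braid relation (\ref{E2}) in the mixed strata, where one must carefully track how the scalars $\sqrt{u}$ and $u-1$ combine under a sequence of three applications of $\mathbf{T}$ (the case $r=t\neq s$ is the most delicate, since the middle index determines which branch of the $\mathbf{T}$-formula is used at each step); and relation (\ref{E9}), which pairs a braid move with a reindexing of the $\mathbf{E}$-support, so one must verify that $\mathbf{T}_i\mathbf{T}_j$ transports the equal-upper-index condition at positions $(i,i+1)$ to the equal-upper-index condition at positions $(j,j+1)$. Once these three-fold subcases are carried out the remaining verifications are routine bookkeeping.
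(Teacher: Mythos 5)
Your proposal is correct and follows essentially the same route as the paper: a direct verification of relations (\ref{E1})--(\ref{E9}) on the tensor basis, reducing to $n=2$ and $n=3$, stratifying by coincidences among the upper indices, and using that on the all-equal stratum the operators recover the classical Jimbo action while on the other strata $\mathbf{T}$ acts (on at least one adjacent pair) as a signed swap. The cases you flag as delicate --- the braid relation (\ref{E2}) on the mixed strata and the transport of the $\mathbf{E}$-support in (\ref{E9}) --- are exactly the ones the paper works out explicitly.
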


\begin{proof}
The proof uses  the same strategy as Theorem 1\cite{ry}.
We  only need to check that the operators $ {\bf E}_i$ and  ${\bf T}_i$ satisfy the respective relations (\ref{E1})--(\ref{E9}). The relations (\ref{E1}), (\ref{E4})--(\ref{E7}) clearly hold. 

To check  relation (\ref{E3}) it is enough to take $n=2$. Evaluating the relation in $ v_i^r\otimes v_j^s$ with $r=s$, the relation becomes the Hecke quadratic relation.  In the case $r\not= s$, the operator ${\bf E}(1 + {\bf T})$ acts as zero  and  ${\bf T}^2$ as the identity, hence the relation holds.

To check the remaining  of the relations, without loss of  generality,  we can suppose $n=3$. 
 Also we observe that it is enough to check  the relations in question on the  basis elements $x= v_{i}^{r}\otimes v_{j}^{s}\otimes v_{k}^{t}$. By simplicity  we shall  introduce the following notation: whenever we have two repetitions in the 
upper indices in the basis elements, we omit the two repeated upper indices  and  we replace the remaining indices by a prime, e.g. $v_{i}^{r}\otimes v_{j}^{s}\otimes v_{k}^{r}$ is written simply as $v_{i}\otimes v_{j}^{\prime}\otimes v_{k}$. Then when we have two repetitions in the upper indices  we shall distinguish three forms of elements:
\begin{equation}\label{3form}
v_{i}^{\prime}\otimes v_{j}\otimes v_{k}
\qquad 
v_{i}\otimes v_{j}^{\prime}\otimes v_{k}
\qquad
v_{i}\otimes v_{j}\otimes v_{k}^{\prime}
\end{equation}
Further, in these elements we can suppose that the lower indices are $1$ or $2$ since ${\bf T}$ acts according the order in the pair formed by lower indices. 
Now, the action of ${\bf T}$ on  primed and unprimed elements is, up to  sign,  a transposition, so we can suppose that the lower index of the primed factor is always $1$. Therefore, the elements in the form as (\ref{3form}) can be reduced to consider the following cases: 
\begin{equation}\label{12form}
\begin{array}{lll}
v_{1}^{\prime}\otimes v_{1}\otimes v_{1}\qquad & v_{1}\otimes v_{1}^{\prime}\otimes v_{1}\qquad & v_{1}\otimes v_{1}\otimes v_{1}^{\prime}\\
v_{1}^{\prime}\otimes v_{1}\otimes v_{2}\qquad & v_{1}\otimes v_{1}^{\prime}\otimes v_{2} \qquad& v_{1}\otimes v_{2}\otimes v_{1}^{\prime}\\
v_{1}^{\prime}\otimes v_{2}\otimes v_{1} \qquad & v_{2}\otimes v_{1}^{\prime}\otimes v_{1} \qquad & v_{2}\otimes v_{1}\otimes v_{1}^{\prime}\\
v_{1}^{\prime}\otimes v_{2}\otimes v_{2} \qquad & v_{2}\otimes v_{1}^{\prime}\otimes v_{2}\qquad & v_{2}\otimes v_{2}\otimes v_{1}^{\prime}
\end{array}
\end{equation}

The checking of   (\ref{E8}) and (\ref{E9})  are similar and routine. Thus we shall check only the first relation of  (\ref{E8}). If all upper indices in $x$ are distinct the operator ${\bf E}_i{\bf E}_j$ acts as zero and as the identity if all upper indices are  equals. Hence
 ${\bf E}_1{\bf E}_2{\bf T}_1$ and  ${\bf T}_1{\bf E}_1{\bf E}_2$ coincide on such $x$'s. Now it is easy to check the relation whenever   $x$ is an element of (\ref{12form}) whose unprimed factor has equal lower indices. The checking  on the other elements of   (\ref{12form}) results from  a direct computation, e.g., for $x= v_{1}\otimes v_{2}\otimes v_{1}^{\prime}$ we have
$$
{\bf E}_1{\bf E}_2{\bf T}_1 (x)=(u-1){\bf E}_1{\bf E}_2(x) + \sqrt{u}\,{\bf E}_1{\bf E}_2( v_{2}\otimes v_{1}\otimes v_{1}^{\prime} )= 0 = {\bf T}_1{\bf E}_1{\bf E}_2(x)
$$
 
Finally we will check the relation (\ref{E2}). If in the basis elements the upper indices are all equal we are in the situation of Jimbo representation ${\bold J}$. If all upper indices are different the action becomes, up to sign, in the permutation action on the factors of the basis elements. Therefore, it only remains to check that   (\ref{E2}) is true when one evaluates on the elements of (\ref{12form}). Now, it is easy to see that the evaluation of both sides of (\ref{E2}) on the elements of (\ref{12form})  whose unprimed factors are equal is $-\sigma_{13}$, where $\sigma_{13}$ permutes the the first with the  third factor in the tensor product. The check of (\ref{E2})  on the remaining  elements of (\ref{12form}) is all similar for all. We shall do,  as a  representative case, the case  $x= v_{1}^{\prime}\otimes v_{1}\otimes v_{2}$:
\begin{eqnarray*}
{\bf T}_2{\bf T}_1{\bf T}_2(x)
 & = & 
  (u-1)\,{\bf T}_2{\bf T}_1(v_{1}^{\prime}\otimes v_{1}\otimes v_{2})  + \sqrt{u}\,{\bf T}_2{\bf T}_1(v_{1}^{\prime}\otimes v_{2}\otimes v_{1} )\\
& = & 
 -(u-1)\,{\bf T}_2(v_{1}\otimes v_{1}^{\prime}\otimes  v_{2})  - \sqrt{u}\,{\bf T}_2(v_{2}\otimes v_{1}^{\prime}\otimes  v_{1} )\\
 & = & 
 (u-1)\,(v_{1}\otimes  v_{2}\otimes  v_{1}^{\prime})  + \sqrt{u}\,( v_{2}\otimes v_{1}\otimes v_{1}^{\prime} )\\
 & = & 
 {\bf T}_1(v_{1}\otimes  v_{2}\otimes  v_{1}^{\prime}) \\
 & = & 
 -{\bf T}_1{\bf T}_2(v_{1}\otimes v_{1}^{\prime}\otimes v_{2}) = {\bf T}_1{\bf T}_2{\bf T}_1(x).
\end{eqnarray*}

\end{proof}

\section{The \rm{PTL} algebra}

We want to define a generalization  of Temperley--Lieb algebra by using the algebra $\mathcal{E}_n$, we shall call this generalization the Partition Temperley--Lieb algebra  which is denoted ${\rm PTL}_n$. A  first natural attempt  of definition ${\rm PTL}_n$ is as the algebra that results   by adding to defining relations of $\mathcal{E}_n$ the 
relations $T_{ij}= 0$, where $T_{ij}$ are  the Steinberg elements $T_{ij}$'s associated  to the $T_i$'s, 
$$
T_{ij} := 1 + T_i + T_j +T_iT_j + T_j T_i +T_iT_jT_j\quad \text{where }\qquad \vert i-j \vert =1
$$
As in  the classical case we want that the Jimbo representation  $\mathcal{J}$ of  $\mathcal{E}_n$ passes to  ${\rm PTL}_n$, hence  the $T_{ij}$'s must  be  killed by $\mathcal{J}$. But  unfortunately this does not happen. In fact,  for  $n=3$ and by taking $x= v_1\otimes v_2 \otimes v_1^{\prime}$, we have
$$
{\bf T}_1 x = (u-1)v_1\otimes v_2 \otimes v_1^{\prime} + \sqrt{u}\,v_2 \otimes v_1 \otimes v_1^{\prime}\qquad {\bf T}_2x = - v_1\otimes v_1^{\prime}\otimes v_2
$$
$$
{\bf T}_2{\bf T}_1 x = -(u-1)v_1\otimes v_1^{\prime} \otimes v_2 - \sqrt{u}\,v_2 \otimes v_1^{\prime} \otimes v_1\qquad {\bf T}_1{\bf T}_2x =  v_1^{\prime}\otimes v_1\otimes v_2
$$
$$
{\bf T}_1{\bf T}_2{\bf T}_1 x = (u-1)v_1^{\prime}\otimes v_1\otimes v_2  + \sqrt{u}\,v_1^{\prime} \otimes v_2 \otimes v_1
$$
Then 
\begin{eqnarray*}
(\mathcal{J} T_{12} )x & = & u\, v_1\otimes v_2 \otimes v_1^{\prime} - u\, v_1\otimes  v_1^{\prime} \otimes v_2 + \sqrt{u}\,v_2 \otimes v_1 \otimes v_1^{\prime} \\ 
& &  
- \sqrt{u}\,v_2 \otimes v_1^{\prime} \otimes v_1 + u\,v_1^{\prime}\otimes v_1\otimes v_2 + \sqrt{u}\,v_1^{\prime} \otimes v_2 \otimes v_1
\end{eqnarray*}
Therefore $\mathcal{J}$ does not kill $T_{12}$.

Having in mind the   above discussion we make the following definition.

\begin{definition}
The Partition Temperley--Lieb algebra ${\rm PTL}_n={\rm PTL}_n(u)$ is defined by adding to the  defining presentation of  ${\mathcal E}_n$ the relations:
\begin{equation}\label{rptl}
E_iE_jT_{i,j} = 0 \quad \text{for all}\quad \vert i -j \vert =1.
\end{equation}
\end{definition}
Clearly, from (\ref{E8}) we have that   $E_iE_jT_{i,j} = 0$ is equivalent to  $T_{i,j} E_iE_j= 0$.
\begin{remark}\rm
Notice that by taking $E_i=1$ the algebra ${\rm PTL}_n$  coincides with the classical Temperley--Lieb algebra. 
Also, we note that the defining relations  of ${\rm PTL}_n$ hold when  $T_i$ is replaced by  the generators $h_i$ of the  Temperley--Lieb algebra and $E_i$ is replaced  by 1, thus  the  mapping  $E_i\mapsto 1$ and $T_i\mapsto h_i$ defines an algebra homomorphism from ${\rm PTL}_n$  onto ${\rm TL}_n$.
\end{remark}

\begin{theorem}\label{JimboPTLn}
The Jimbo representation $\mathcal{J}_n$ of $\mathcal{E}_n$ factors through the algebra ${\rm PTL}_n$.
\end{theorem}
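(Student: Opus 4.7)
The strategy is to show that $\mathcal{J}_n(E_i E_j T_{ij}) = 0$ for every $|i-j|=1$. Since the operators appearing in $E_i E_j T_{ij}$ act as the identity outside three consecutive tensor factors, I may localize to $V^{\otimes 3}$ and, after reindexing, treat only the case $i=1,\, j=2$; it then suffices to verify that ${\bf E}_1 {\bf E}_2 {\bf T}_{12} = 0$ as an endomorphism of $V^{\otimes 3}$.

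The argument rests on two structural observations that can be read directly off the formulas defining ${\bf E}$ and ${\bf T}$. First, ${\bf E}_1 {\bf E}_2$ is precisely the projector onto the ``diagonal'' subspace $D \subset V^{\otimes 3}$ spanned by those basis vectors $v_a^r \otimes v_b^s \otimes v_c^t$ for which $r = s = t$; every other basis vector is annihilated. Second, ${\bf T}_k$ preserves the multiset of upper indices of each basis element: when the two upper indices in the acted-upon factors coincide the output stays in the same piece (and mimics Jimbo's formula), while when they differ the output simply swaps the two upper indices up to a sign. In particular ${\bf T}_{12}$ preserves the upper-index multiset on positions $1,2,3$.

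Combining these, if a basis element $v_a^r \otimes v_b^s \otimes v_c^t$ has upper-index triple with not all entries equal, then ${\bf T}_{12}$ sends it into a sum of basis vectors whose upper-index multisets are all $\{r,s,t\}$, none of which has the form $(r',r',r')$; so ${\bf E}_1 {\bf E}_2$ kills the result. If instead $(r,s,t) = (r,r,r)$, then on the diagonal subspace $V_r^{\otimes 3}$ the formulas defining ${\bf T}$ reduce verbatim to those defining Jimbo's ${\bf J}$, so ${\bf T}_{12}$ acts there as the image of the classical Steinberg element $h_{12}$ under the Jimbo representation, which vanishes by Proposition \ref{kerJ}. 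Either way the composite ${\bf E}_1 {\bf E}_2 {\bf T}_{12}$ sends the basis element to zero, so $\mathcal{J}_n(E_1 E_2 T_{12}) = 0$, and the representation factors through ${\rm PTL}_n$.

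No serious obstacle is expected: the tensor representation of Theorem \ref{JimboEn} was engineered so that ${\bf E}_i$ detects equality of upper indices and ${\bf T}_i$ reduces to Jimbo on the diagonal, and the claim is essentially an expression of that design. The only mildly delicate bookkeeping is checking that ${\bf T}_k$ preserves upper-index multisets uniformly across all four branches of its definition, but this is immediate from reading off each case.
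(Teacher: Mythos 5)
Your argument follows the paper's own proof almost exactly: the same localization to three tensor factors and the same dichotomy according to whether the three upper indices of a basis vector coincide. Your treatment of the off-diagonal case is in fact slightly cleaner than the paper's, since the observation that every branch of ${\bf T}$ preserves the multiset of upper indices disposes of the ordering of ${\bf E}_1{\bf E}_2$ versus ${\bf T}_{12}$ directly, without having to invoke relation (\ref{E8}) for the operators.

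The diagonal case, however, contains a gap --- one that the paper's proof shares, but a gap nonetheless. Proposition \ref{kerJ} concerns the Jimbo representation on a \emph{two}-dimensional space $V=\langle v_1,v_2\rangle$; the vanishing of the Steinberg element there is exactly the statement that the $q$-analogue of $\Lambda^3$ of a $2$-dimensional space is zero. The diagonal subspace you restrict to is $V_r=\mathrm{span}\{v_i^r \,;\, 1\le i\le n\}$, which is $n$-dimensional as the basis of $V$ is defined in Section 3, and for $n\ge 3$ the Steinberg element does \emph{not} die under the $n$-dimensional Jimbo representation: a direct computation shows that the coefficient of $v_1^1\otimes v_2^1\otimes v_3^1$ in ${\bf T}_{12}(v_1^1\otimes v_2^1\otimes v_3^1)$ equals $(1+(u-1))^3=u^3\neq 0$, and this vector (indeed all of $W_1^{\otimes 3}$, $W_1=\mathrm{span}\{v_i^1\}$) is fixed by ${\bf E}_1{\bf E}_2$. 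Your phrase ``the formulas defining ${\bf T}$ reduce verbatim to those defining Jimbo's ${\bf J}$'' conceals exactly this point: the same \emph{pattern} of formulas reappears, but on a larger space, where Proposition \ref{kerJ} no longer applies. So as written the claim ${\bf E}_1{\bf E}_2{\bf T}_{12}=0$ fails on diagonal basis vectors with three distinct lower indices; the step would be repaired if the lower index were restricted to $\{1,2\}$ (i.e.\ $V\cong \mathbb{C}^2\otimes\mathbb{C}^n$), but not for the basis as defined. You should either justify the diagonal case by a genuinely two-dimensional reduction (which does not exist here) or flag that the theorem, and the paper's proof of it, needs the auxiliary space to be modified.
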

\begin{proof}
Without loss of  generality we can suppose that $n=3$. Thus, we must  prove 
that $\mathcal{J}_3 (E_1E_2T_{12})=0$. Now,  keeping the notations used during the proof 
of Theorem \ref{JimboEn}, to prove the theorem it is enough to see  that $\mathcal{J}_3 (E_1E_2T_{12})$ kill the basis elements  $x= v_{i}^{r}\otimes v_{j}^{s}\otimes v_{k}^{t}$. If all upper indices in $x$ are equal, $\mathcal{J}_3$ is the Jimbo representation of the Hecke algebra, so  $\mathcal{J}_3(T_{12})$  kill $x$;  hence  $\mathcal{J}_3 (E_1E_2T_{12})$ kill $x$ too. If the upper  indices of $x$ are not all equal, we have that $x$ is killed by   
$\bf{E}_1$ or $\bf{E}_2$, hence $\mathcal{J}_3 (E_1E_2T_{12})(x)=0$.
\end{proof}

We are going to prove now that the set of relations (\ref{rptl}) can be reduced to only one. To do  this we need to introduce the following element $\Gamma$, 
$$
 \Gamma := T_1T_2\cdots T_{n-1}
 $$
 \begin{lemma}\label{gam}
 For all $1\leq i, j\leq n-1$  we have:
 \begin{enumerate}
 \item $T_i=\Gamma^{i-1}T_1\Gamma^{-(i-1)}$
 \item $T_{i, i+1}=\Gamma^{i-1}T_{1,2}\Gamma^{-(i-1)}$
 \item $E_{i} = \Gamma^{i-1}E_1\Gamma^{-(i-1)}$
 \item $T_{i+1}\Gamma^{i-1} = \Gamma^{i-1}T_2 $
 \item $E_{\{i, i+2\}} = \Gamma^{i-1}E_{\{1,3\}}\Gamma^{-(i-1)}$
 \end{enumerate}
 \end{lemma}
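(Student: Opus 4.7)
The strategy is to reduce every part of the lemma to the two base identities
\[
\Gamma T_1\Gamma^{-1}=T_2, \qquad \Gamma E_1\Gamma^{-1}=E_2,
\]
and then bootstrap by induction on $i$, exploiting that conjugation by $\Gamma^{i-1}$ is an algebra automorphism of $\mathcal{E}_n$.

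For the braid identity, I factor $\Gamma=(T_1T_2)(T_3\cdots T_{n-1})$; the tail commutes with $T_1$ by (\ref{E1}), so $\Gamma T_1=T_1T_2T_1\cdot T_3\cdots T_{n-1}$, and the braid relation (\ref{E2}) rewrites $T_1T_2T_1=T_2T_1T_2$, giving $\Gamma T_1=T_2\Gamma$. Repeating the same argument with $\Gamma=T_1\cdots T_{i-1}(T_iT_{i+1})T_{i+2}\cdots T_{n-1}$ yields $\Gamma T_i\Gamma^{-1}=T_{i+1}$ for every $i\leq n-2$, and induction on $i$ proves (1). Statement (4) is then an immediate rearrangement of (1) at index $i+1$:
\[
T_{i+1}\Gamma^{i-1}=\Gamma^i T_1\Gamma^{-i}\cdot\Gamma^{i-1}=\Gamma^{i-1}\bigl(\Gamma T_1\Gamma^{-1}\bigr)=\Gamma^{i-1}T_2.
\]
For (2), conjugation by $\Gamma^{i-1}$ is a homomorphism that sends $T_1\mapsto T_i$ by (1) and $T_2\mapsto T_{i+1}$, since $\Gamma^{i-1}T_2\Gamma^{-(i-1)}=\Gamma^i T_1\Gamma^{-i}=T_{i+1}$; applying it termwise to the definition of $T_{1,2}$ therefore produces the defining expression for $T_{i,i+1}$.

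The tie identity $\Gamma E_1\Gamma^{-1}=E_2$ is handled identically: (\ref{E6}) lets $E_1$ commute past $T_3,\ldots,T_{n-1}$, so the claim reduces to $T_1T_2E_1T_2^{-1}T_1^{-1}=E_2$, which is exactly (\ref{E9}) with $(i,j)=(2,1)$ rewritten as $E_2T_1T_2=T_1T_2E_1$. Repeating this pattern on $\Gamma=T_1\cdots T_{i-1}(T_iT_{i+1})T_{i+2}\cdots T_{n-1}$, now with (\ref{E9}) at $(i+1,i)$, gives $\Gamma E_i\Gamma^{-1}=E_{i+1}$, and induction proves (3). (One could alternatively quote the proposition $T_wE_IT_w^{-1}=E_{wI}$ with $w=s_1\cdots s_{n-1}$, whose action is the cycle $(1,2,\ldots,n)$.) Finally, (5) follows from the definition $E_{\{1,3\}}=E_{13}=T_1E_2T_1^{-1}$ by distributing the conjugation by $\Gamma^{i-1}$ and plugging (1) into the $T_1^{\pm 1}$ factors and (3) into $E_2$, yielding $T_iE_{i+1}T_i^{-1}=E_{i,i+2}=E_{\{i,i+2\}}$.

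The main obstacle is identifying the correct instance of (\ref{E9}) for the $E$-step: once one recognizes that $E_2T_1T_2=T_1T_2E_1$ has precisely the shape left after the commuting factors of $\Gamma$ are peeled off, the whole lemma collapses into two parallel inductions driven by conjugation by $\Gamma$.
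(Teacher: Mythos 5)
Your proof is correct and follows essentially the same route as the paper: induction on $i$ driven by conjugation by $\Gamma$, with the braid relations and (\ref{E1}) giving (1), relations (\ref{E6}) and (\ref{E9}) giving (3), and (2), (4), (5) following formally from these. The only cosmetic difference is in (5), where you conjugate the defining expression $E_{\{1,3\}}=T_1E_2T_1^{-1}$ while the paper uses the equivalent form $T_2E_1T_2^{-1}$; your version is, if anything, more faithful to the definition of $E_{ij}$ and supplies the details the paper leaves to the reader.
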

\begin{proof}
The statement (1) results from an inductive argument on $i$ and the braid relations of $T_i$'s. The statement (2)  is a  result applying (1). The proof of statement (3) is analogous to the proof of (1), that is: an argument inductive on $i$ and using the relation (\ref{E6}). The  statement (4) is clear, since (1). Finally, we have: 
\begin{eqnarray*}
\Gamma^{i-1}E_{\{1,3\}}\Gamma^{-(i-1)} &  =&  \Gamma^{i-1}T_2 E_1 T_2^{-1}\Gamma^{-(i-1)}\\
&  =& 
 \Gamma^{i-1}T_2 (\Gamma^{-(i-1)}E_i\Gamma^{(i-1)}) T_2^{-1}\Gamma^{-(i-1)}\\
 &=& 
  T_{i+1} E_iT_{i+1}^{-1}
\end{eqnarray*}
Thus, the  statement (5) is proved.
 \end{proof}
\begin{proposition}
The relation $E_1E_2T_{1,2} = 0$ implies the relations $E_iE_jT_{i,j} = 0$, for all  $\vert i-j\vert =1$.
\end{proposition}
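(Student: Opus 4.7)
The plan is to show that all relations $E_iE_{i+1}T_{i,i+1}=0$ (the case $|i-j|=1$ with $j=i+1$; the case $j=i-1$ will follow by symmetry) are conjugates of $E_1E_2T_{1,2}$ by the appropriate power of $\Gamma$. Once this is established, vanishing of $E_1E_2T_{1,2}$ forces the vanishing of the whole family.

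More precisely, I would first observe that statements (2) and (3) of Lemma~\ref{gam} already give
\[
T_{i,i+1} = \Gamma^{i-1}T_{1,2}\Gamma^{-(i-1)}, \qquad E_i = \Gamma^{i-1}E_1\Gamma^{-(i-1)}.
\]
The missing piece is a formula expressing $E_{i+1}$ as $\Gamma^{i-1}E_2\Gamma^{-(i-1)}$. Applying (3) with $i$ replaced by $i+1$, this reduces to the identity
\[
\Gamma E_1\Gamma^{-1} = E_2.
\]
To prove this, use that $E_1$ commutes with $T_3,\ldots,T_{n-1}$ by relation~(\ref{E6}), so $\Gamma E_1\Gamma^{-1}$ collapses to $T_1T_2 E_1 T_2^{-1}T_1^{-1}$. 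By the Corollary~1 of \cite{ry} cited above, conjugation of $E_I$ by $T_w$ yields $E_{wI}$, hence $T_1T_2E_1T_2^{-1}T_1^{-1} = E_{s_1s_2\{1,2\}} = E_{\{2,3\}} = E_2$, as desired.

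With this in hand, the main computation is a one-line conjugation:
\[
E_iE_{i+1}T_{i,i+1}
= \Gamma^{i-1}E_1\Gamma^{-(i-1)}\,\Gamma^{i-1}E_2\Gamma^{-(i-1)}\,\Gamma^{i-1}T_{1,2}\Gamma^{-(i-1)}
= \Gamma^{i-1}\bigl(E_1E_2T_{1,2}\bigr)\Gamma^{-(i-1)},
\]
which vanishes under the hypothesis $E_1E_2T_{1,2}=0$. Finally, the remaining case $|i-j|=1$ with $j=i-1$ is dealt with by noting that $E_iE_j = E_jE_i$ by (\ref{E4}) and that the Steinberg element is symmetric in its indices, $T_{j,i}=T_{i,j}$ (this is immediate from the braid relation $T_iT_jT_i=T_jT_iT_j$ applied to the defining sum).

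There is no real obstacle here; the only point requiring care is the verification $\Gamma E_1\Gamma^{-1}=E_2$, which is why I singled it out. Everything else is a direct application of Lemma~\ref{gam}, so the proof is essentially a bookkeeping exercise in conjugation by $\Gamma$.
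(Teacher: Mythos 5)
Your proof is correct and follows essentially the same route as the paper: reduce to $j=i+1$ via $T_{ij}=T_{ji}$ and $E_iE_j=E_jE_i$, then conjugate $E_1E_2T_{1,2}$ by $\Gamma^{i-1}$ using Lemma~\ref{gam}. The identity $\Gamma E_1\Gamma^{-1}=E_2$ that you verify separately is already the case $i=2$ of Lemma~\ref{gam}(3), so no extra argument is needed there.
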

\begin{proof}
 We can suppose $j=i+1$, since   $T_{ij}=T_{ji}$ and $E_i$ and $E_j$ commute. 
From the statements  (1) and (3)Lemma \ref{gam}, we have:
\begin{eqnarray*}
E_i E_{i+1} T_{i, i+1} &  = &   (\Gamma^{i-1}E_1\Gamma^{-(i-1)})(\Gamma^{i}E_1\Gamma^{-i})
(\Gamma^{i-1}T_{1,2}\Gamma^{-(i-1)}) \\
 & = & 
  \Gamma^{i-1}E_1 \Gamma E_1\Gamma^{-1} T_{1,2}\Gamma^{-(i-1)}) = \Gamma^{i-1}E_1E_2 T_{1,2}\Gamma^{-(i-1)})
\end{eqnarray*}
 Hence the proof follows.
\end{proof}
\begin{corollary}\label{PTLquo}
The Partition Temperley--Lieb algebra ${\rm PTL}_n$ can be regarded as the quotient 
of ${\mathcal E}_n$ by the two--sided ideal generated by $E_1E_2T_{12}$.
\end{corollary}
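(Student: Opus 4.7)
The plan is to derive the corollary as an essentially immediate consequence of the preceding proposition together with the definition of ${\rm PTL}_n$. By definition ${\rm PTL}_n$ is presented as ${\mathcal E}_n$ with the additional relations $E_iE_jT_{i,j}=0$ for all $|i-j|=1$; equivalently, it is the quotient ${\mathcal E}_n/I$, where $I$ is the two-sided ideal generated by the set $\{E_iE_jT_{i,j}\,;\,|i-j|=1,\ 1\le i,j\le n-1\}$. I want to show that this ideal coincides with the two-sided ideal $I_0$ generated by the single element $E_1E_2T_{1,2}$.

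The first step is the inclusion $I_0\subseteq I$, which is trivial since $E_1E_2T_{1,2}$ is itself one of the listed generators of $I$. The second step, the reverse inclusion $I\subseteq I_0$, is where the preceding proposition is used. For an arbitrary generator $E_iE_jT_{i,j}$ of $I$ with $|i-j|=1$, I may assume $j=i+1$: indeed $T_{i,i+1}=T_{i+1,i}$ follows directly from the symmetric form of the Steinberg element, and $E_iE_{i+1}=E_{i+1}E_i$ by relation (\ref{E4}). The computation in the proof of the proposition then shows
\[
E_iE_{i+1}T_{i,i+1}=\Gamma^{i-1}\bigl(E_1E_2T_{1,2}\bigr)\Gamma^{-(i-1)},
\]
which visibly lies in the two-sided ideal $I_0$. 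Hence every generator of $I$ belongs to $I_0$, giving $I\subseteq I_0$.

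Combining the two inclusions yields $I=I_0$, so ${\rm PTL}_n={\mathcal E}_n/I={\mathcal E}_n/I_0$, which is exactly the statement of the corollary. There is no serious obstacle here: the only work has already been done in establishing that the conjugation by $\Gamma^{i-1}$ intertwines $E_1E_2T_{1,2}$ with $E_iE_{i+1}T_{i,i+1}$, via parts (1)--(3) of Lemma \ref{gam}. The corollary is therefore a direct repackaging of that proposition in the language of quotients by two-sided ideals.
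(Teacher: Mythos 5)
Your proposal is correct and follows essentially the same route as the paper: the corollary is a direct consequence of the preceding proposition, whose proof already exhibits $E_iE_{i+1}T_{i,i+1}$ as the conjugate $\Gamma^{i-1}(E_1E_2T_{1,2})\Gamma^{-(i-1)}$, so the two ideals coincide. Your explicit two-inclusion packaging just makes precise what the paper leaves implicit.
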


\section{Others presentations for ${\rm PTL}_n$}

 In order to have more comfortable notations we shall introduce the following element $\delta$, 
$$
\delta := \frac{1-u}{1+u}\in K
$$ 
\subsection{}
Having in mind the  definition of   the idempotents generators  $f_i$ of the Temperley--Lieb algebra, it is natural to consider the following definition.
$$
F_i := \frac{1}{u+1}(1 + T_i)\qquad (1\leq i \leq n-1)
$$
It is obvious that  $F_i$ commute with  $E_i$ (and $T_i$) and that they form a set of generators for the 
algebra ${\rm PTL}_n$, but notice that the $F_i$'s are not   idempotents. In fact, from (\ref{E3}) we have 
$$
F_i^2 =  \frac{1}{(u+1)^2}(1 + 2T_i + 1 + (u-1)E_i + (u-1)E_iT_i)
$$
then
$$
F_i^2 = (1+\delta)F_i - \delta E_iF_i
$$
We have the following proposition
\begin{theorem}\label{preptlF}
${\rm PTL}_n$ can be presented by the generators $1, E_1, \ldots , E_{n-1}, F_1, \ldots ,  F_{n-1}$ subject to the relations (\ref{E4}), (\ref{E5}) together with the following relations
\begin{eqnarray}
F_i^2 & = &  (1+\delta)F_i - \delta E_iF_i \label{ptlF4}\\
F_i F_j  & = &  F_j F_i \quad \text{ for all $\vert i-j\vert >1$}  \label{ptlF1}\\
F_i E_j  & = &  E_j F_i \quad \text{ for all $\vert i-j\vert >1$}  \label{ptlF2}\\
E_i F_i   & = &  F_i E_i  \label{ptlF3}
\end{eqnarray}
and for all $\vert i  -  j\vert =1$:
\begin{eqnarray}
E_iE_jF_i  & = &   \quad  F_i E_iE_j	\quad  = \quad  E_jF_iE_j + \frac{1}{u+1} (E_iE_j - E_j)\label{ptlF5}\\
E_iF_jF_i 
&  =  &   
F_jF_iE_j + \frac{1}{u+1}\left[(E_i  - E_j)F_j + F_i(E_i-E_j)\right]- \frac{1}{(u+1)^2}(E_i-E_j) \label{ptlF6}\\
 F_iF_jF_i
& = & 
\frac{1}{(u+1)^2}\left(F_i - (1-u)E_iF_i\right)\label{ptlF7}
\end{eqnarray}
\end{theorem}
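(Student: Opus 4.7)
The approach is to use the invertible linear change of generators $T_i = (u+1)F_i - 1$ (with inverse $F_i = \tfrac{1}{u+1}(1+T_i)$) to construct mutually inverse algebra isomorphisms between ${\rm PTL}_n$ and the algebra $A_n$ defined by the new presentation. Concretely, I would introduce $\phi:{\rm PTL}_n \to A_n$ sending $T_i \mapsto (u+1)F_i - 1$ and $E_i \mapsto E_i$, and $\psi:A_n \to {\rm PTL}_n$ sending $F_i \mapsto \tfrac{1}{u+1}(1+T_i)$ and $E_i \mapsto E_i$. Both composites fix generators, so once each map is shown to be well-defined by verifying that the target satisfies the image of the source's relations, they are mutually inverse isomorphisms, yielding the presentation.

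The verification of $\phi$ reduces to checking that each of (\ref{ptlF1})--(\ref{ptlF7}) holds in ${\rm PTL}_n$. Relations (\ref{ptlF1}), (\ref{ptlF2}), (\ref{ptlF3}) are direct $K$-linear consequences of (\ref{E1}), (\ref{E6}), (\ref{E7}). Relation (\ref{ptlF4}) is a reformulation of the quadratic relation (\ref{E3}) after dividing by $(u+1)^2$ and using the identities $(1+\delta)(u+1) = 2$ and $\delta(u+1) = 1-u$. Relations (\ref{ptlF5}) and (\ref{ptlF6}) follow by expanding the products into $T$-generators, applying (\ref{E8}) and (\ref{E9}), and absorbing the leftover ``$1$''-pieces of the factors $(1+T_k)$ into the linear $E$-corrections displayed on the right-hand sides. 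For (\ref{ptlF7}) I would use the direct expansion identity $(1+T_i)(1+T_j)(1+T_i) = T_{ij} + T_i(1+T_i)$, rewrite $T_i(1+T_i) = (1+(u-1)E_i)(1+T_i)$ via (\ref{E3}), and invoke the defining PTL relation $E_iE_jT_{ij} = 0$ together with Corollary \ref{PTLquo} to eliminate the $T_{ij}$-contribution.

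For $\psi$, the relations (\ref{E1})--(\ref{E9}) and $E_iE_jT_{ij}=0$ must be recovered in $A_n$ under $T_i = (u+1)F_i - 1$. Relations (\ref{E1}), (\ref{E3})--(\ref{E7}) fall out immediately from the corresponding $F$-relations, and (\ref{E8}), (\ref{E9}) come from rearranging (\ref{ptlF5}), (\ref{ptlF6}). The main obstacle is the simultaneous recovery of the braid relation (\ref{E2}) and the PTL relation $E_iE_jT_{ij}=0$ from the single cubic relation (\ref{ptlF7}): applying (\ref{ptlF7}) in both orderings together with (\ref{ptlF4}), one reduces $T_iT_jT_i$ and $T_jT_iT_j$ to the same symmetric polynomial in $F_i, F_j, F_iF_j, F_jF_i$, which establishes (\ref{E2}); a parallel expansion of $E_iE_jT_{ij}$ in the new variables then collapses to zero. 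This mirrors the classical situation of Proposition \ref{pretl}, where the single cubic identity $f_if_jf_i = \tfrac{u}{(u+1)^2}f_i$ simultaneously encodes the braid relation and the Steinberg vanishing $h_{ij}=0$ of the Temperley--Lieb algebra.
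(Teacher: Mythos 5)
Your overall strategy --- the affine change of generators $T_i=(u+1)F_i-1$, $F_i=\tfrac{1}{u+1}(1+T_i)$, organized as a pair of mutually inverse homomorphisms --- is exactly the paper's argument; the paper phrases it as a relation-by-relation equivalence under the substitution rather than as two explicit maps. (A small slip: the list of checks you assign to $\phi$ is the one needed for $\psi$ and vice versa, since well-definedness of a map out of a presented algebra requires the \emph{source's} relations to hold in the target after substitution; but both lists get checked either way.)

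The genuine problem is the step for (\ref{ptlF7}). Your expansion is correct: combining $(1+T_i)(1+T_j)(1+T_i)=T_{ij}+T_i(1+T_i)$ with $T_i(1+T_i)=(1+(u-1)E_i)(1+T_i)$ gives
$$
T_{ij}=(u+1)^3F_iF_jF_i-(u+1)F_i+(1-u^2)E_iF_i,
$$
so (\ref{ptlF7}) is equivalent to the \emph{full} Steinberg vanishing $T_{ij}=0$, not to the defining relation $E_iE_jT_{ij}=0$ of ${\rm PTL}_n$. Invoking $E_iE_jT_{ij}=0$ ``to eliminate the $T_{ij}$-contribution'' only works after multiplying the identity by $E_iE_j$, and what you then obtain is $E_iE_jF_iF_jF_i=\frac{u}{(u+1)^2}E_iE_jF_i$, which is strictly weaker than (\ref{ptlF7}). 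Worse, (\ref{ptlF7}) cannot hold in ${\rm PTL}_n$ at all: by Theorem \ref{JimboPTLn} the Jimbo representation factors through ${\rm PTL}_n$, while the explicit computation at the start of Section 4 shows $\mathcal{J}(T_{12})\neq 0$; hence $T_{12}\neq 0$ in ${\rm PTL}_n$ and your map $\psi$ is not well-defined on relation (\ref{ptlF7}). (The other direction is unproblematic: (\ref{ptlF7}) does imply both the braid relation (\ref{E2}) and $E_iE_jT_{ij}=0$ in the $F$-presented algebra.) You have in effect reproduced the gap in the paper's own proof, which asserts that ``(\ref{rptl}) is equivalent to (\ref{ptlF7})'' after deriving precisely the displayed formula for $T_{ij}$; as stated, the $F$-presentation describes the quotient of ${\mathcal E}_n$ by $T_{ij}=0$, a proper quotient of ${\rm PTL}_n$, and the argument can only be repaired by weakening the cubic relation with an $E_iE_j$ prefactor.
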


\begin{proof}
It is easy to check that (\ref{E1}) (respectively (\ref{E6})) is equivalent to (\ref{ptlF1}) (respectively (\ref{ptlF2})), so    having in mind the   previous discussion to the theorem, it only remains to prove that the relations (\ref{ptlF5})--(\ref{ptlF7}) hold and that relations  (\ref{E8}), (\ref{E9}), (\ref{rptl}) and (\ref{E2}) can be deduced  from the relations 
of the theorem.

We have that $T_i= (u+1)F_i -1$. Now replacing  this expression of $T_i$ in  (\ref{E8}) (respectively (\ref{E9})) it is a routine to check that (\ref{E8}) becomes (\ref{ptlF5}) (respectively (\ref{ptlF6})). 

We have to  check that relation (\ref{rptl}) is equivalent to relation (\ref{ptlF7}). We have
$$
T_iT_j = ((u+1)F_i -1)((u+1)F_j -1) = (u+1)^2 F_iF_j - (u+1)F_i - (u+1)F_j +1
$$
then 
\begin{eqnarray*}
 T_iT_jT_i 
 & = & 
  (u+1)^3 F_iF_jF_i - (u+1)^2F_i^2 - (u+1)^2F_jF_i + (u+1)F_i\\
&  & 
 -(u+1)^2 F_iF_j + (u+1)F_i + (u+1)F_j -1
\end{eqnarray*}
Therefore, by using (\ref{ptlF4}), we deduce
\begin{eqnarray*}
 T_iT_jT_i & = &  (u+1)^3 F_iF_jF_i + (1-u^2)E_iF_i \\
 &  &
  - (u+1)^2F_jF_i - (u+1)^2F_iF_j + (u+1)F_j- 1
 \end{eqnarray*}
Now, substituting each summand of $T_{ij}$ by its expression in term of $F_i$'s one obtains    
$$
T_{ij} = (u+1)^3 F_iF_jF_i + (1-u^2)E_iF_i - (u+1)F_i
$$
Hence (\ref{rptl}) is equivalent  (\ref{ptlF7}).

Finally notice that (\ref{ptlF7}) implies (\ref{E2}), since  the above expression 
of $T_iT_jT_i$ in terms of $F_i$'s tells us that  (\ref{E2}) is equivalent to 
$$
(u+1)^2 F_iF_jF_i + (1-u)E_iF_i + F_j  = 
(u+1)^2 F_jF_iF_j + (1-u)E_jF_j + F_i
$$
Thus the proof is concluded.
\end{proof}

\subsection{}

In this subsection we shall show   a presentation of ${\rm PTL}_n$ by idempotent generators.  For  $1\leq i <j\leq n-1$, we define
$$
L_i := \frac{1}{1+u}\left( T_i + 1\right) \left(\alpha +(1-\alpha)E_i\right)\qquad \text{where}\quad \alpha := \frac{1+u}{2}
$$
notice that 
\begin{equation}\label{adL}
L_i =\frac{1}{2}\left(T_i + \delta T_iE_i + \delta E_i +  1 \right) =
\frac{1}{2}(1+T_i)(1+ \delta E_i)
\end{equation}
Also we have
\begin{equation}\label{L->F}
L_i = \frac{u+1}{2}F_i  + \frac{1-u}{2}E_iF_i
\end{equation}

It is clear that $L_i$ commute with $E_i$,  $T_i$ and $F_i$. We have the following useful lemma.
\begin{lemma}\label{ide}
 For all $i$ we have:
\begin{enumerate}
\item $L_i^2 = L_i$
\item $(1+u)E_iL_i = E_i(1+ T_i)$
\item $T_i = 2L_i +(u-1)E_iL_i -1$
\item $E_iL_i = E_iF_i $
\item $F_i = (1+\delta)L_i -\delta E_iL_i $.
\end{enumerate}
\end{lemma}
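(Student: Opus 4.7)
The plan is to verify each of the five identities by direct computation from the two alternative expressions (\ref{adL}) and (\ref{L->F}) for $L_i$, together with the basic relations: the quadratic relation (\ref{E3}) for $T_i^2$, idempotency $E_i^2=E_i$ from (\ref{E5}), the commutation $E_iT_i=T_iE_i$ from (\ref{E7}), and the scalar identity $1+\delta=\tfrac{2}{1+u}$. I would do the parts in the order $(2)\Rightarrow(3),(4)\Rightarrow(5)\Rightarrow(1)$, since (1) is the only statement with any real bookkeeping.

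First I would prove (2). Starting from (\ref{adL}), I compute
\[
E_iL_i \;=\; \tfrac{1}{2}E_i(1+T_i)(1+\delta E_i) \;=\; \tfrac{1}{2}(1+T_i)E_i(1+\delta E_i)
\]
using $E_iT_i=T_iE_i$; expanding and using $E_i^2=E_i$ gives the factor $(1+\delta)E_i=\tfrac{2}{1+u}E_i$, yielding $E_iL_i=\tfrac{1}{1+u}E_i(1+T_i)$, which is (2). Statement (4) is then immediate from the definition $F_i=\tfrac{1}{1+u}(1+T_i)$. For (3), expand $2L_i=(1+T_i)(1+\delta E_i)=1+T_i+\delta E_i+\delta E_iT_i$ and $(u-1)E_iL_i=-\delta E_i(1+T_i)$ using (2); the sum $2L_i+(u-1)E_iL_i-1$ telescopes to $T_i$.

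For (5), expand $(1+\delta)L_i$ by (\ref{adL}) and the identity $1+\delta=\tfrac{2}{1+u}$:
\[
(1+\delta)L_i \;=\; \tfrac{1}{1+u}(1+T_i)(1+\delta E_i) \;=\; F_i + \delta E_iF_i,
\]
and then $\delta E_iL_i=\delta E_iF_i$ by (4), so the two terms cancel to give $F_i$.

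The main obstacle, such as it is, lies in (1). The clean route is via (\ref{L->F}): first observe that $E_iF_i$ is idempotent, since $(E_iF_i)^2=E_iF_i^2=E_i\bigl[(1+\delta)F_i-\delta E_iF_i\bigr]=E_iF_i$ by (\ref{ptlF4}) and $E_i^2=E_i$. Then square $L_i=\tfrac{u+1}{2}F_i+\tfrac{1-u}{2}E_iF_i$; because $E_i$ commutes with $F_i$, both cross terms collapse to $E_iF_i^2=(1+\delta)E_iF_i-\delta E_iF_i=E_iF_i$, and a short calculation with the coefficients shows
\[
L_i^2 \;=\; \tfrac{u+1}{2}F_i + \tfrac{1-u}{2}E_iF_i \;=\; L_i.
\]
The only delicate point is tracking the $u$-dependent coefficients through this final simplification, but once the two relations $F_i^2=(1+\delta)F_i-\delta E_iF_i$ and $(E_iF_i)^2=E_iF_i$ are in hand the algebra is routine.
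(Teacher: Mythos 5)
Your proposal is correct and follows essentially the same route as the paper: every part is a direct verification from the two expressions (\ref{adL}) and (\ref{L->F}) for $L_i$ together with the quadratic relation, and your parts (2)--(5) match the paper's derivations almost verbatim. The only (cosmetic) difference is in part (1), where the paper squares the factorization $L_i=\tfrac12(1+T_i)(1+\delta E_i)$ and simplifies $(1+T_i)^2$ via relation (\ref{E3}), whereas you square the form (\ref{L->F}) using $F_i^2=(1+\delta)F_i-\delta E_iF_i$ and the idempotency of $E_iF_i$; both computations are routine and equally valid.
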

\begin{proof} 
We have:
$$
L_i^2 = 4^{-1} (1+T_i)^2 (1+\delta E_i)^2 = 4^{-1}(2(1+ T_i) +(u-1)E_i(1+T_i))(1 +(2\delta+ \delta^2)E_i)
$$
then
\begin{eqnarray*}
L_i^2  & = & 4^{-1}(1+ T_i)(2 +(u-1)E_i)(1 +(2\delta+ \delta^2)E_i)\\
        & = & 4^{-1}(1+ T_i)(2+ (2(2\delta+ \delta^2) + (u-1) + (u-1)(2\delta +                    \delta^2))E_i)\\
       & = & 4^{-1}(1+ T_i)(2 + (2\delta + \delta^2)(1+u) + u-1 )E_i) \\
        & = & 4^{-1}(1+ T_i)(2 + 2\delta E_i) = L_i.
\end{eqnarray*}
The second assertion  follows by  multiplying  the  formula of $L_i$ by $E_i$. To prove the third assertion,  we bring first $E_iT_i$ from the second assertion and then  we substitute this expression of $E_iT_i$ in (\ref{adL}), thus the third assertion follows. The fourth assertion results by multiplying (\ref{L->F}) by $E_i$. The fifth assertion result directly from (4) and (\ref{L->F}).
\end{proof}

\begin{theorem}\label{preptl}
${\rm PTL}_n$ can be presented by the generators $1, E_1, \ldots , E_{n-1}, L_1, \ldots ,  L_{n-1}$ subject to the relations (\ref{E4}), (\ref{E5}) together with the following relations
\begin{eqnarray}
L_i^2   & = &   L_i \label{ptl1}\\
L_i L_j  & = &  L_j L_i \quad \text{ for all $\vert i-j\vert >1$}  \label{ptl2}\\
L_iE_j   & = &  E_j L_i  \quad \text{ for all $\vert i-j\vert >1$} \label{ptl3}\\
 L_i E_i  & = &  E_iL_i  \label{ptl4}
\end{eqnarray}
and for all $\vert i  -  j\vert =1$:
\begin{eqnarray}
& & 
E_iE_jL_i  \,  =   \,  L_i E_iE_j \, = \, E_jL_iE_j +2^{-1}(E_iE_j -E_j)\label{ptl5}\\
& & 4L_iL_jE_i +  2 E_j(L_j + L_i) + E_i \,  =  \,   4E_jL_iL_j + 2(L_i + L_j)E_i  + E_j \label{ptl6}
\end{eqnarray}
\begin{equation}\label{ptl7}
\begin{array}{c}
 8L_iL_jL_i + 4(u-1) \left[L_iE_jL_jL_i +E_iL_iL_jL_i+ L_iL_jE_iL_i\right]   \\
 + (u-1)^2(u+5)E_iE_j L_iL_jL_i \, = \, 2L_i + 3(u-1)E_iL_i + (u-1)^2E_iE_jL_i
\end{array}
\end{equation}
\end{theorem}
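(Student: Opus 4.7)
The strategy is to use the presentation of ${\rm PTL}_n$ by generators $1, E_i, F_i$ from Theorem~\ref{preptlF} as a stepping stone, together with the mutually inverse change of variables
$$
L_i = \tfrac{u+1}{2}F_i + \tfrac{1-u}{2}E_iF_i, \qquad F_i = (1+\delta)L_i - \delta E_iL_i,
$$
which are established (together with $T_i = 2L_i + (u-1)E_iL_i - 1$) in Lemma~\ref{ide}. Substituting one expression into the other and using $E_iL_i = E_iF_i$ and $E_i^2 = E_i$ confirms that these substitutions are two-sided inverses on the generators, so the problem reduces to checking that the two sets of relations translate into each other.

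First, working inside ${\rm PTL}_n$ and defining $L_i$ by the displayed formula, I would verify that relations (\ref{ptl1})--(\ref{ptl7}) hold. Relation (\ref{ptl1}) is Lemma~\ref{ide}(1), and (\ref{ptl2})--(\ref{ptl4}) are immediate from (\ref{ptlF1})--(\ref{ptlF3}) since $L_i$ is a polynomial in $F_i$ and $E_i$ only. For (\ref{ptl5})--(\ref{ptl7}) I would substitute $F_i = (1+\delta)L_i - \delta E_iL_i$ into (\ref{ptlF5})--(\ref{ptlF7}), collect terms using $E_i^2 = E_i$ and $E_iL_i = L_iE_i$, and check that the inhomogeneous fractions $\frac{1}{u+1}$ and $\frac{1}{(u+1)^2}$ combine with the weights produced by $1+\delta$ and $\delta$ to recover exactly the coefficients $\tfrac{1}{2}$, $4(u-1)$ and $(u-1)^2(u+5)$ displayed in the theorem.

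Conversely, starting from the abstract algebra $A$ defined by the theorem's presentation, I would set $T_i := 2L_i + (u-1)E_iL_i - 1$ and verify the defining relations of ${\rm PTL}_n$. Relations (\ref{E1}), (\ref{E6}), (\ref{E7}) follow from (\ref{ptl2})--(\ref{ptl4}) since $T_i$ is then a polynomial in $L_i$ and $E_i$, and the quadratic relation (\ref{E3}) is a direct computation using (\ref{ptl1}) and $E_i^2 = E_i$. The nontrivial relations (\ref{E8}), (\ref{E9}) and (\ref{rptl}) translate into (\ref{ptl5}), (\ref{ptl6}) and (\ref{ptl7}) respectively, which just reverses the expansion carried out in the first direction. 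Composing the two homomorphisms yields the identity on each set of generators, producing the desired isomorphism.

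The main obstacle is the bookkeeping for (\ref{ptl7}). Expanding $F_iF_jF_i$ as a polynomial in $L_i, L_j, E_i, E_j$ produces eight terms of bidegree up to two in $\delta$, mixing the products $L_iL_jL_i$, $E_iL_iL_jL_i$, $L_iE_jL_jL_i$, $L_iL_jE_iL_i$ and $E_iE_jL_iL_jL_i$; the right-hand side $\frac{1}{(u+1)^2}(F_i - (1-u)E_iF_i)$ similarly unpacks into contributions along $L_i$, $E_iL_i$ and $E_iE_jL_i$. After clearing the common denominator and using only $E_iL_i = L_iE_i$ together with the commutativity of the $E_i$'s, matching the coefficients yields (\ref{ptl7}) on the nose. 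The calculation is tedious but purely mechanical, and is the only serious computational content of the proof.
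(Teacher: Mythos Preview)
Your overall strategy coincides with the paper's: both pass through Theorem~\ref{preptlF} and the change of variables of Lemma~\ref{ide}, and both identify the translation of (\ref{ptlF7}) into (\ref{ptl7}) as the only substantial computation.

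However, your account of that computation contains a genuine gap. The raw expansion of
\[
F_iF_jF_i=\bigl((1+\delta)L_i-\delta E_iL_i\bigr)\bigl((1+\delta)L_j-\delta E_jL_j\bigr)\bigl((1+\delta)L_i-\delta E_iL_i\bigr)
\]
has eight monomials, not the five you list: in addition there appear $E_iL_iE_jL_jL_i$, $L_iE_jL_jE_iL_i$, $E_iL_iL_jE_iL_i$ and $E_iL_iE_jL_jE_iL_i$. These cannot be reduced to your five types ``using only $E_iL_i=L_iE_i$ together with the commutativity of the $E_i$'s''; one must invoke (\ref{ptl5}). For three of the extra terms the commutation clause $E_iE_jL_k=L_kE_iE_j$ of (\ref{ptl5}) suffices, but for $E_iL_iL_jE_iL_i=L_i(E_iL_jE_i)L_i$ one needs the inhomogeneous part of (\ref{ptl5}), namely $E_iL_jE_i=E_iE_jL_j-\tfrac{1}{2}(E_iE_j-E_i)$. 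This step produces the lower-degree corrections $\tfrac{1}{2}E_iL_i-\tfrac{1}{2}E_iE_jL_i$, and it is precisely these that account for the terms $3(u-1)E_iL_i$ and $(u-1)^2E_iE_jL_i$ on the right of (\ref{ptl7}); note that the right-hand side $(u+1)^{-2}\bigl(F_i-(1-u)E_iF_i\bigr)$ itself unpacks only into $L_i$ and $E_iL_i$, with no $E_iE_jL_i$ contribution. Without this use of (\ref{ptl5}) the coefficients simply do not match, so the step is not the purely mechanical matching you describe. The paper carries out exactly this reduction explicitly.
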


\begin{proof}
We will use the presentation of Theorem \ref{preptlF}. From (5)Lemma \ref{ide},  follows that   ${\rm PTL}_n$ is  generated by   $1$, $E_i$'s and  $L_i$'s. 
Checking that (\ref{ptlF4})--(\ref{ptlF6}) are equivalent, respectively, to (\ref{ptl1})--(\ref{ptl6}) is a straight forward and just a routine, so we leave the computation to the reader. Thus, 
to finish the proof  it only remains to check that (\ref{ptl7})is equivalent to (\ref{ptlF7}).

We have 
\begin{eqnarray*}
F_iF_j & = & ((1+\delta )L_i -\delta E_iL_i)((1+\delta )L_j -\delta E_jL_j) \\
& =&  (1+ \delta)^2L_iL_j -\delta (1+\delta)L_iE_jL_j - \delta (1+\delta)E_iL_iL_j +\delta^2 E_iL_iE_jL_j)
\end{eqnarray*}
Hence
\begin{eqnarray*}
F_iF_jF_i 
 & = &
(1+\delta)^3 L_iL_jL_i -\delta (1+\delta)^2 \left[ L_iE_jL_jL_i + E_iL_iL_jL_i + L_iL_jE_iL_i\right] \\
& &
  + \delta^2(1+\delta)E_iL_iE_jL_jL_i  + \delta^2(1+\delta)L_iE_jL_jE_iL_i + \delta^2(1+\delta)E_iL_iL_jE_iL_i \\ 
  & & 
  - \delta^3 E_iL_iE_jL_jE_iL_i
\end{eqnarray*}
Using now  (\ref{E4}), (\ref{E5}), (\ref{ptl4}) and (\ref{ptl5}) we get
\begin{eqnarray*}
F_iF_jF_i 
 & = &
(1+\delta)^3 L_iL_jL_i -\delta (1+\delta)^2 \left[ L_iE_jL_jL_i + E_iL_iL_jL_i + L_iL_jE_iL_i\right] \\
& &
  (2\delta^2(1+\delta)- \delta^3)E_iE_jL_iL_jL_i + \delta^2(1+\delta)E_iL_iL_jE_iL_i
\end{eqnarray*}
Now applying on the last term the relation (\ref{ptl4}) and  using later (\ref{ptl5}), we get  $E_iL_iL_jE_iL_i = L_i(E_iL_jE_i)L_i$, so
\begin{eqnarray*}
E_iL_iL_jE_iL_i & = & L_i \left[ E_iE_jL_j -\frac{1}{2}(E_iE_j - E_i) \right]L_i \\
& = & 
E_iE_jL_iL_jL_i - \frac{1}{2}E_iE_jL_i^2 +\frac{1}{2}L_iE_iL_i \quad (\text{by using (\ref{ptl5})})\\
& = & 
E_iE_jL_iL_jL_i - \frac{1}{2}E_iE_jL_i +\frac{1}{2}E_iL_i \quad (\text{by using (\ref{ptl1}) and (\ref{ptl4})})
\end{eqnarray*}
Then 
\begin{eqnarray*}
F_iF_jF_i 
&  = & 
(1+\delta)^3 L_iL_jL_i -\delta (1+\delta)^2 \left[L_iE_jL_jL_i + E_iL_iL_jL_i+L_iL_jE_iL_i\right] \\
& & 
  + (2\delta^3 + 3\delta^ 2)E_iE_jL_iL_jL_i 
  - \delta^2(1+\delta)\left[ \frac{1}{2}E_iE_jL_i -\frac{1}{2}E_iL_i\right]
\end{eqnarray*}
On the other side, from (4)Lemma \ref{ide}, we have 
$$
F_i+ (u-1)E_iF_i = (1+\delta )L_i - \delta E_iL_i + (u-1)E_iL_i = (1+\delta )L_i - (u+2) \delta E_iL_i
$$
Therefore, the relation (\ref{ptl7}) is equivalent to 
\begin{eqnarray*}
&   & 
(1+\delta)^3 L_iL_jL_i -\delta (1+\delta)^2 \left[L_iE_jL_jL_i +E_iL_iL_jL_i+ L_iL_jE_iL_i\right] 
  + (2\delta^3 + 3\delta^ 2)E_iE_jL_iL_jL_i \\  
& = & 
\frac{1}{(u+1)^2}\left[(1+\delta )L_i - (u+2) \delta E_iL_i \right] + \delta^2(1+\delta)\left[ \frac{1}{2}E_iE_jL_i -\frac{1}{2}E_iL_i\right]
\end{eqnarray*}
which is reduced, after multiplication by $(u+1)^2$, to
\begin{eqnarray*}
&   & 
\frac{8}{(u+1)} L_iL_jL_i -4\delta \left[L_iE_jL_jL_i +E_iL_iL_jL_i+ L_iL_jE_iL_i\right] 
  + (1-u)^2(2\delta +3)E_iE_jL_iL_jL_i \\  
& = & 
(1+\delta )L_i - (u+2) \delta E_iL_i + (1-u)^2(1+\delta)\left[ \frac{1}{2}E_iE_jL_i -\frac{1}{2}E_iL_i\right]
\end{eqnarray*}
or equivalently
\begin{eqnarray*}
&   & 
\frac{8}{(u+1)} L_iL_jL_i -4\delta \left[L_iE_jL_jL_i +E_iL_iL_jL_i+ L_iL_jE_iL_i\right] 
  + (1-u)^2(2\delta +3)E_iE_jL_iL_jL_i \\  
& = & 
(1+\delta )L_i - 3\delta E_iL_i  + (1-u)\delta E_iE_jL_i 
\end{eqnarray*}
Multiplying this last equation by $u+1$ we obtain (\ref{ptl7}).
\end{proof}

\smallbreak

\begin{remark} \rm
By taking $E_i=1$ the elements $L_i$'s become $f_i$'s and  the Theorem \ref{preptl} and Theorem \ref{preptlF} become Theorem \ref{pretl}.  
\end{remark}

\section{A linear basis for ${\rm PTL}_n$} 

By using  essentially Theorems \ref{preptlF}, \ref{JimboPTLn}  we shall  construct a linear basis of ${\rm PTL}_n$. Further we use also the following lemmas.
\begin{lemma}\label{FE=EF}
For all $i,j$ such that $\vert i-j \vert=1$, we have:
\begin{enumerate}
\item 
$F_iE_{j} = T_iE_jT_i^{-1} F_i + \frac{1}{u+1}\left(E_j - T_iE_jT_i^{-1}\right)$
\item 
$E_jF_i = F_i T_iE_jT_i^{-1} + \frac{1}{u+1}\left(E_j - T_iE_jT_i^{-1}\right)$
\end{enumerate}
\end{lemma}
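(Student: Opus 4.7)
The plan is to derive both identities directly from the defining relation $F_i = \frac{1}{u+1}(1+T_i)$, together with the single conjugation fact that $T_i^{-1}E_jT_i = T_iE_jT_i^{-1}$ whenever $\vert i-j\vert=1$ (equivalently, that $T_i^2$ commutes with $E_j$).

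For part (1), I would expand
\[
F_iE_j \;=\; \frac{1}{u+1}\bigl(E_j + T_iE_j\bigr),
\]
rewrite the second summand as $T_iE_j = (T_iE_jT_i^{-1})T_i$, and then substitute $T_i = (u+1)F_i - 1$. The two resulting pieces regroup into exactly $(T_iE_jT_i^{-1})F_i + \frac{1}{u+1}(E_j - T_iE_jT_i^{-1})$, which is (1).

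Part (2) is obtained by the mirror manoeuvre: starting from $E_jF_i = \frac{1}{u+1}(E_j + E_jT_i)$, I would rewrite $E_jT_i = T_i(T_i^{-1}E_jT_i)$, then replace $T_i^{-1}E_jT_i$ by $T_iE_jT_i^{-1}$ using the conjugation identity, substitute $T_i = (u+1)F_i - 1$, and collect. This is precisely the place where the commutation is needed, since on the left of $F_i$ we only have access to $T_iE_jT_i^{-1}$ through $E_jT_i$.

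The only nontrivial step, and the one I view as the main (though very mild) obstacle, is the identity $T_i^{-1}E_jT_i = T_iE_jT_i^{-1}$ for $\vert i-j\vert=1$. I would obtain it by appealing to Corollary 1 of \cite{ry}, stated in Section~2 as the formula $T_wE_IT_w^{-1}=E_{wI}$: since $s_i$ is an involution, both $T_iE_jT_i^{-1}$ and $T_i^{-1}E_jT_i$ equal $E_{s_iI}$, with $I=\{j,j+1\}$ the two-element part supporting $E_j$. Alternatively, it can be checked by hand, expanding $T_i^2 = 1 + (u-1)E_i(1+T_i)$ via (\ref{E3}) and pushing $E_j$ across using (\ref{E4}), (\ref{E7}) and (\ref{E8}). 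Once this is in hand, the rest of the argument is two or three lines of linear combination in each part.
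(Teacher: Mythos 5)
Your proposal is correct and takes essentially the same route as the paper, whose entire proof is the one-line instruction to expand $F_i$ on both sides; your version additionally makes explicit the single ingredient that expansion actually requires in part (2), namely that $T_i^2$ commutes with $E_j$ (equivalently $T_i^{-1}E_jT_i = T_iE_jT_i^{-1}$). Both of your justifications for that identity --- via $T_wE_IT_w^{-1}=E_{wI}$ with $s_i$ an involution, or by expanding $T_i^2$ through relation (\ref{E3}) and pushing $E_j$ across with (\ref{E4}), (\ref{E7}), (\ref{E8}) --- are valid.
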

\begin{proof}
It is enough  to expand $F_i$ in both side of the equality. 
\end{proof}
\begin{lemma}\label{aiju}
Any word in $1, F_1,\dots , F_{n-1}$, $E_1,\dots , E_{n-1}$ can be expressed as a $K$--linear combination of words in $E_i$'s and $F_i$'s having   at most one $F_{n-1}$, $E_{n-1}$, or $F_{n-1}E_{n-1}$.
\end{lemma}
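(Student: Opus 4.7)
The strategy is induction on the number of occurrences of $E_{n-1}$ and $F_{n-1}$ in the word, where an adjacent pair $F_{n-1}E_{n-1}$ (equivalently $E_{n-1}F_{n-1}$ by (\ref{ptlF3})) is counted as a single \emph{atom}. The base case of at most one atom is immediate. For the inductive step, one takes a word with at least two atoms, localizes around its two leftmost ones, and rewrites that portion as a $K$-linear combination of words each having strictly fewer atoms; iterating terminates the reduction.

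Inside this local block I first invoke the commutation relations (\ref{E4}), (\ref{E6}), (\ref{ptlF1}), (\ref{ptlF2}) to push every generator $E_i, F_i$ with $i \le n-3$ outside the two chosen atoms, so the separator between them involves only $E_{n-2}$ and $F_{n-2}$. Using $E_{n-2}^2 = E_{n-2}$, the analogue of (\ref{ptlF4}) for $F_{n-2}^2$, and $E_{n-2}F_{n-2}=F_{n-2}E_{n-2}$ (from (\ref{ptlF3})), this separator reduces to a linear combination of $1, E_{n-2}, F_{n-2}, E_{n-2}F_{n-2}$. The same trick reduces each atom to a linear combination of $E_{n-1}, F_{n-1}, F_{n-1}E_{n-1}$. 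The problem thus collapses to a finite list of local configurations $\alpha \zeta \beta$. For $\zeta \in \{1, E_{n-2}\}$ the reduction is immediate from $E_{n-1}^2=E_{n-1}$, (\ref{ptlF4}), and the commutativity of $E_{n-2}$ with $E_{n-1}$. For $\zeta = F_{n-2}$, the principal configurations $E_{n-1}F_{n-2}E_{n-1}$, $E_{n-1}F_{n-2}F_{n-1}$ (and its mirror), and $F_{n-1}F_{n-2}F_{n-1}$ are exactly the left-hand sides of (\ref{ptlF5}), (\ref{ptlF6}), and (\ref{ptlF7}), each producing terms of the required shape. For $\zeta = E_{n-2}F_{n-2}$, one factors out $E_{n-2}$ and commutes it past the trailing atom, using (\ref{E4}) for $E_{n-1}$ and (\ref{ptlF5}) (with its correction terms) for $F_{n-1}$, reducing back to the $\zeta = F_{n-2}$ case.

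The main obstacle is the configuration $F_{n-1} E_{n-2} F_{n-1}$, which is not literally the left-hand side of any defining relation. My plan is to expand one copy of $F_{n-1}$ as $\frac{1}{u+1}(1+T_{n-1})$ and exploit the $\mathcal{E}_n$-level identities $F_{n-1}T_{n-1} = F_{n-1} - (1-u)E_{n-1}F_{n-1}$ (a consequence of (\ref{ptlF4})) together with the Ryom--Hansen conjugation $T_{n-1}E_{n-2}T_{n-1}^{-1} = E_{\{n-2,n\}}$ to re-express the product in a form where (\ref{ptlF5}) applies; the terms coming from the $T_{n-1}E_{n-2}F_{n-1}$ piece, after passing $T_{n-1}$ through $E_{n-2}$, collect into linear combinations of $E_{n-1}F_{n-1}$- and $F_{n-1}$-atomic words on each side of an $E_{n-2}$ factor. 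Alternatively one may derive the required identity by computing $F_{n-1}F_{n-2}F_{n-1}F_{n-2}$ in two ways, via $(F_{n-1}F_{n-2}F_{n-1})F_{n-2}$ and $F_{n-1}(F_{n-2}F_{n-1}F_{n-2})$ using (\ref{ptlF7}) each time, which yields a consistency equation that can be solved for $F_{n-1}E_{n-2}F_{n-1}$ modulo single-atom words. Once this last case is settled, the atom count strictly decreases on every summand and the induction closes, establishing the lemma.
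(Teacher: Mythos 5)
Your route is genuinely different from the paper's: the paper disposes of this lemma in one line, by citing Proposition~1 of \cite{aj} (the analogous statement for words in the $T_i$'s and $E_i$'s, already valid in ${\mathcal E}_n$) and observing that $F_{n-1}$ is a linear combination of $1$ and $T_{n-1}$. A self-contained rewriting proof is a reasonable substitute, but as written yours has a concrete gap at the very first reduction step. You claim that the commutation relations (\ref{E4}), (\ref{E6}), (\ref{ptlF1}), (\ref{ptlF2}) let you push every generator of index $\leq n-3$ out of the separator between the two leftmost atoms, so that the separator becomes a word in $E_{n-2},F_{n-2}$ alone. Those relations only let such generators commute past the \emph{atoms} (index $n-1$); they do not commute with the letters $E_{n-2},F_{n-2}$ that may surround them inside the separator. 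In $F_{n-1}(F_{n-2}F_{n-3}F_{n-2})F_{n-1}$, for instance, the letter $F_{n-3}$ is trapped, and extracting it requires first normalizing $F_{n-2}F_{n-3}F_{n-2}$ --- which is exactly the statement of the lemma one level down. The standard fix is a simultaneous induction on $n$: apply the inductive hypothesis to the separator to bring it to the form $W_1\zeta W_2$ with $W_1,W_2$ words of index $\leq n-3$ and $\zeta\in\{1,E_{n-2},F_{n-2},F_{n-2}E_{n-2}\}$, and only then commute $W_1,W_2$ past the atoms. You have not set this up, and without it your ``finite list of local configurations'' never materializes.

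Your handling of the key configuration $F_{n-1}E_{n-2}F_{n-1}$ is also too loose to verify. The left-hand sides of (\ref{ptlF5}) are $E_iE_jF_i$ and $E_jF_iE_j$, neither of which matches, and $T_{n-1}$ does not commute past $E_{n-2}$. What does work is $T_{n-1}E_{n-2}T_{n-1}=E_{\{n-2,n\}}T_{n-1}^{2}$, followed by the quadratic relation (\ref{E3}) and the monoid identity $E_{\{n-2,n\}}E_{n-1}=E_{n-2}E_{n-1}$; the stray summand $E_{\{n-2,n\}}=T_{n-2}E_{n-1}T_{n-2}^{-1}$ is acceptable because $T_{n-2}^{\pm 1}$ is a polynomial in $F_{n-2},E_{n-2}$, so this is a word containing a single $E_{n-1}$ and no $F_{n-1}$. (Note this reduction needs no PTL relation, consistent with the fact that \cite{aj} proves the statement already in ${\mathcal E}_n$.) Your proposed alternative, comparing the two bracketings of $F_{n-1}F_{n-2}F_{n-1}F_{n-2}$ via (\ref{ptlF7}), only yields the identity $E_{n-1}F_{n-1}F_{n-2}=F_{n-1}E_{n-2}F_{n-2}$, in which $F_{n-1}E_{n-2}F_{n-1}$ does not appear, so it cannot be ``solved'' for that element. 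The overall double-induction strategy is salvageable, but both of these steps need to be carried out explicitly before the atom count can be said to decrease.
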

\begin{proof}
It is a consequence of Proposition 1\cite{aj} and the fact that $F_i$ is a linear expression of $1$ and $T_i$.
\end{proof}
\begin{definition}
A word in $F_1, \dots , F_{n-1}$ is called $F$--reduced (or simply reduced) if and only if has the form 
\begin{equation}\label{Fred}
(F_{i_1}\cdots F_{j_1})(F_{i_2}\cdots F_{j_2})\cdots (F_{i_k}\cdots F_{j_k})
\end{equation}
 where $0\leq k\leq n-1$ and 
\begin{eqnarray*}
1\leq i_1 < i_2<\cdots <i_k\leq n-1\\
1\leq j_1 < j_2<\cdots <j_k\leq n-1\\
i_1\geq i_2, i_2\geq j_2 ,\ldots ,i_k\geq j_k
\end{eqnarray*} 
\end{definition}

\begin{proposition}
Any word in $1, E_1, \dots , E_{n-1}, F_1, \dots , F_{n-1}$ may be written as $K$--linear combination of words  in the form $E_IF$, where $I\in \mathsf{P}_n$ and $F$ is $F$--reduced.
\end{proposition}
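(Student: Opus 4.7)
The plan is to prove the proposition by induction on the length of the word $W$, reducing it to the required shape in three stages: (i) push every $E_k$ to the left of every $F_k$; (ii) collapse the resulting pure $E$-word into a single $E_I$ via the monoid isomorphism $\langle E_1,\ldots,E_{n-1}\rangle\cong\mathsf{P}_n$; and (iii) put the remaining pure $F$-word into the reduced form (\ref{Fred}).

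For stage (i), I would scan $W$ from left to right and, whenever a factor $F_iE_j$ appears, apply the appropriate relation: (\ref{ptlF3}) when $i=j$, (\ref{ptlF2}) when $|i-j|>1$, and Lemma~\ref{FE=EF}(1) when $|i-j|=1$. In the last case,
\[
F_iE_j = (T_iE_jT_i^{-1})\,F_i + \tfrac{1}{u+1}\bigl(E_j - T_iE_jT_i^{-1}\bigr),
\]
and $T_iE_jT_i^{-1}=E_{s_i\cdot\{j,j+1\}}$ by the conjugation formula $T_wE_IT_w^{-1}=E_{wI}$, so it already lies in the subalgebra generated by the $E_k$'s and expands as a product of $E$-generators. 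The leading summand therefore respects the desired $E$-before-$F$ ordering, while the correction summand has strictly fewer $F$'s and is consumed by the inductive hypothesis. Stage (ii) is then immediate from the monoid isomorphism.

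For stage (iii), the remaining pure $F$-word is straightened using (\ref{ptlF1}) (commutations for $|i-j|>1$) together with the braid-like identity (\ref{ptlF7}), $F_iF_jF_i=\tfrac{1}{(u+1)^2}\bigl(F_i-(1-u)E_iF_i\bigr)$. This identity reintroduces an $E_i$, but it strictly shortens the $F$-content, so one re-runs stages (i)--(ii) on the new factor and continues the straightening on a strictly shorter $F$-word; Lemma~\ref{aiju} keeps the book-keeping under control by limiting the occurrences of the topmost generator $F_{n-1}$. The straightening of the resulting $F$-word into the precise shape (\ref{Fred}) then parallels the classical normalization of a product of simple transpositions in $S_n$ into a product of descending runs, under the symbolic bijection $F_i\leftrightarrow s_i$.

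The main obstacle is to exhibit a well-founded complexity measure that strictly decreases under both types of rewriting move. A natural candidate is the lexicographic pair (number of $F$-letters in the word, number of ordered position-pairs $(a,b)$ with $a<b$ carrying an $F$ at $a$ and an $E$ at $b$): stage (i) strictly decreases the second coordinate without increasing the first, and each application of (\ref{ptlF7}) strictly decreases the first. Once this measure is set up, the iteration of stages (i)--(iii) terminates and delivers the desired expression as a $K$-linear combination of words of the form $E_IF$.
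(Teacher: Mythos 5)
Your first two stages (pushing the $E$'s to the front with Lemma~\ref{FE=EF} and absorbing them into a single $E_I$) match the opening move of the paper's proof, and your complexity measure does show that the rewriting process terminates. The genuine gap is in stage~(iii): termination alone does not show that the words on which no move applies are precisely those of shape~(\ref{Fred}). The appeal to ``the classical normalization of a product of simple transpositions into descending runs'' does not fill this gap, because that normal form for $S_n$ is obtained using the braid relation $s_is_js_i=s_js_is_j$, which the $F_i$'s do \emph{not} satisfy --- relation~(\ref{ptlF7}) replaces it by a length-reducing identity --- and because (\ref{Fred}) is strictly stronger than the $S_n$ normal form: it requires \emph{both} $i_1<\cdots<i_k$ \emph{and} $j_1<\cdots<j_k$, which is the Catalan-type condition. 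Proving that every pure $F$-word can be brought to this shape is the combinatorial core of the proposition, and the paper supplies it by induction on $n$ in the style of Proposition~2.8 of \cite{gohajo}: Lemma~\ref{aiju} reduces to words with at most one occurrence of the top-index generator $F_{n-1}$ (or $E_{n-1}$, or $E_{n-1}F_{n-1}$), the inductive hypothesis is applied to the two flanking subwords, and a final pass with (\ref{ptlF7}) forces the $j_l$'s to increase. Your plan, once this step is made precise, would essentially have to reproduce that induction.

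Two smaller points. First, $T_iE_jT_i^{-1}=E_{s_i\{j,j+1\}}$ is of the form $E_I$ but is \emph{not} in general a product of the generators $E_1,\ldots,E_{n-1}$ (e.g.\ $E_{\{1,3\}}$ is not such a product), so stage~(ii) must work with arbitrary $E_I$, $I\in\mathsf{P}_n$, and stage~(i) needs the general conjugation identity $F_kE_I=E_{s_kI}F_k+\frac{1}{u+1}(E_I-E_{s_kI})$ rather than only Lemma~\ref{FE=EF} as stated. Second, the commutations (\ref{ptlF1}) used to expose the patterns $F_i^2$ and $F_iF_jF_i$ leave your measure unchanged, so you must argue on commutation classes (or bound the number of commutations between length-reducing moves) for the termination argument to be airtight.
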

\begin{proof}
We have  adapted  the proof of Proposition 2.8\cite{gohajo}. 
We will use induction on $n$. The assertion is clearly  valid for $n=2$. We assume now that the proposition is valid for $n$. Let $W$ a word in $1, E_1, \dots , E_n, F_1, \dots , F_n$. By using  Lemma \ref{FE=EF} we can move the $E_i$'s appearing in $W$ to the front position, obtaining in this way  that  $W$ is  a linear combination of words in the form $E_IW^{\prime}$, where $W^{\prime}$ is a word in $1, F_1, \ldots , F_n$. Thus, to prove the proposition it is  enough    to show that $W^{\prime}$ is a linear combination of words in the form desired. Now, if $W^{\prime}$ does not contain $F_n$ then we are done. If $W^{\prime }$ contains $F_n$, according to Lemma \ref{aiju}, we have that  
$W^{\prime}$ is a linear combination of words in the form
$$
W_1R_nW_2
$$
where $R_n = E_n$, $F_n$ or $E_nF_n$ and $W_i$ are words in $1, E_1, \dots , E_{n-1}, F_1, \dots , F_{n-1}$. If $R_n=E_n$, according to Lemma \ref{FE=EF}, we can move $R_n$ to the front position and then  using the induction hypothesis we are done.
 Suppose  $R_n = F_n$, we note that by induction hypothesis $W_2$ is a linear combination of words in the form 
 $$
 E_JV(F_nF_{n-1}\cdots F_{j_k})
 $$
 where now $J\in\mathsf{P}_{n-1}$,  $V$ is a word reduced in $1, F_1, \dots , F_{n-2}$ (notice that 
 $F_nF_{n-1}\cdots F_{j_k}$  could be empty). Hence $W^{\prime}$ is a linear combination of words of the form
 $$
W_1F_n  E_JV(F_nF_{n-1}\cdots F_{j_k})
 $$
Now,  $F_n  E_J= E_{s_nJ}F_n $, so using (\ref{ptlF4}) and  (\ref{ptlF2})  follows that $W^{\prime}$ can be written as  a linear combination
$
(1 + \delta )N_1 - \delta N_2
$ 
with  $N_1:= E_{J^{\prime}}V^{\prime}(F_nF_{n-1}\dots F_{j_k})$ and  $N_2:= E_{J^{\prime}}V^{\prime}(E_nF_nF_{n-1}\dots F_{j_k})$, 
where $J^{\prime}\in\mathsf{P}_n$ and  $V^{\prime}$ is a word  in $1, F_1, \dots F_{n-1}$. Again we note that in $N_2$,  $E_n$ can move to the front position, so $N_2$ is in fact in the form of $N_1$. Therefore, $W^{\prime}$ is a linear combination of words in the form
$$
 E_{J^{\prime}}V^{\prime}(F_nF_{n-1}\dots F_{j_k})
$$ 
where $J^{\prime}\in\mathsf{P}_n$ and  $V^{\prime}$ is a word  in $1, F_1, \dots F_{n-1}$. Applying the induction hypothesis, on $V^{\prime}$, we deduce that $W^{\prime}$ is a linear combination of words in the form 
$
E_I F
$, 
where $I\in\mathsf{P}_n$ and $F$ has the form 
$$
F= (F_{i_1}\cdots F_{j_1})(F_{i_2}\cdots F_{j_2})\cdots (F_{i_k}\cdots F_{j_k})
$$
with $i$'s increasing and $i_l\geq j_l$, for all $1\leq l\leq k $. Thus it  remains to prove that in $F$'s the $j$'s can be taken increasing.
 Suppose $j_1\geq j_2$, so 
$$
F  =  (F_{i_1}\cdots F_{j_1 + 1})(F_{i_2}\cdots (F_{j_1}F_{j_1 +1}F_{j_1})\cdots F_{j_2})\cdots (F_{i_k}\cdots F_{j_k})
$$
Then, by using (\ref{ptlF7}), we have $F = (u + 1)^{-2}F_1 -  (u+1)^{-1}\delta F_2$, where 
$$
F_1 := (F_{i_1}\cdots F_{j_1 + 1})(F_{i_2}\cdots F_{j_1}\cdots F_{j_2})\cdots (F_{i_k}\cdots F_{j_k}) 
$$
and 
$$
F_2:=(F_{i_1}\cdots F_{j_1 + 1})(F_{i_2}\cdots (E_{j_1}F_{j_1})\cdots F_{j_2})\cdots F_{i_k}\cdots F_{j_k})
$$
Clearly (applying Lemma \ref{FE=EF}),  $E_{j_1}$ in $F_2$ can be moved to the front position. Therefore, by using an inductive argument we deduce  that  $F$ can be expressed as a linear combination in the desired form. Hence $W^{\prime}$ can be written in the desired form. Thus, the proof 
is concluded. 
\end{proof}

\begin{conjecture}
The set formed by the elements $E_IF$, where $I\in \mathsf{P}_n$ and $F$ is reduced ,  is a linear basis of ${\rm PTL}_n$. Hence 
 the dimension of ${\rm PTL}_n$ is $b_nC_n$.
\end{conjecture}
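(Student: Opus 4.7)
The proposition just proved shows that $\{E_I F : I\in\mathsf{P}_n,\ F\ F\text{-reduced}\}$ spans ${\rm PTL}_n$; since there are $b_n$ choices of $I\in\mathsf{P}_n$ and the number of $F$-reduced words matches the Catalan number $C_n$ (the same combinatorics that yields the classical basis of ${\rm TL}_n$ via $F_i\leftrightarrow f_i$), we already have $\dim{\rm PTL}_n\le b_n C_n$. The conjecture therefore reduces to linear independence of this spanning set, equivalently to the matching lower bound $\dim{\rm PTL}_n\ge b_n C_n$.

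The plan is to produce a representation (or a diagrammatic model) of ${\rm PTL}_n$ of dimension at least $b_n C_n$ that visibly separates the elements $E_I F$. Two routes look promising. Route (i): use the tensor representation $\mathcal{J}_n$, which by Theorem \ref{JimboPTLn} factors through ${\rm PTL}_n$; for each pair $(I,F)$ pick an explicit test vector $\xi_{I,F}\in V^{\otimes n}$ whose upper (colour) indices $r$ encode the tie pattern $I$ (exploiting that each $\mathbf{E}_i$ acts as the projector onto equal-colour tensors) and whose lower (position) indices $i$ track the Temperley--Lieb word $F$, and show by a triangularity argument with respect to a suitable order on pairs that the matrix $\bigl(\mathcal{J}_n(E_I F)\,\xi_{I',F'}\bigr)$ has full rank. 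Route (ii): construct a diagrammatic algebra $\mathcal{D}_n$ whose basis is the set of pairs (planar $(n,n)$ TL diagram, set partition of $\{1,\dots,n\}$), with multiplication given by stacking of TL diagrams together with a merging rule on tie blocks indexed by the connected components created by the stacking; verifying that $\mathcal{D}_n$ satisfies the defining relations of ${\rm PTL}_n$ would yield a surjection ${\rm PTL}_n\twoheadrightarrow\mathcal{D}_n$, and combined with the upper bound this forces the equality $\dim{\rm PTL}_n=b_n C_n$.

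I would attempt route (ii) first, as it is directly modelled on the classical diagrammatic realization of ${\rm TL}_n$ and on the ``tied braids'' picture of $\mathcal{E}_n(u)$ used in \cite{ry}. The intended workflow is: (1) write down $\mathcal{D}_n$ in terms of diagrammatic generators $\widetilde E_i$ (identity diagram with strands $i,i+1$ tied) and $\widetilde F_i$ or $\widetilde L_i$ (the cup-cap TL idempotent, possibly with a tie on the two affected strands); (2) check the relations (\ref{E1})--(\ref{E9}) and $E_iE_jT_{i,j}=0$ against the merging rule; (3) produce the basis by ordering diagrams as (reduced TL diagram)$\otimes$(tie pattern).

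The main obstacle lies in the coupled relations (\ref{ptlF6}), (\ref{ptlF7}) and the PTL relation (\ref{rptl}), where partition and Temperley--Lieb generators interact through scalar combinations involving $\delta=(1-u)/(1+u)$: the tie-merging rule in $\mathcal{D}_n$ must be engineered so that all such identities hold simultaneously, and naive rules (merge whenever strands share a cup or a tie) typically satisfy (\ref{E8})--(\ref{E9}) and (\ref{rptl}) but break one of (\ref{ptlF6}) or (\ref{ptlF7}). A secondary obstacle is that $\mathcal{J}_n$ was explicitly designed to be non-faithful even on $\mathcal{E}_n$, so route (i) may only witness a proper quotient of ${\rm PTL}_n$ and fail to prove the conjecture on its own. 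If both routes stall, a Peirce-style fallback along the commuting idempotents $E_I$ is worth pursuing: for each $I\in\mathsf{P}_n$ analyze the corner $E_I\cdot{\rm PTL}_n\cdot E_I$, show that after reducing the PTL relation it becomes a Temperley--Lieb-type quotient of dimension $C_n$, and sum over $I$ to recover $b_n C_n$.
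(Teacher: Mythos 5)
This statement is stated in the paper as a \emph{conjecture}: the paper proves only the spanning half (the Proposition immediately preceding it) and offers no proof of linear independence, so there is no paper argument to compare yours against. Your proposal correctly identifies the situation --- the upper bound $\dim {\rm PTL}_n \le b_nC_n$ follows from the spanning result together with the count of $F$-reduced words (which is indeed $C_n$, by the same Jones normal form combinatorics as for ${\rm TL}_n$), and what is missing is the lower bound. But neither of your two routes is actually carried out, so as it stands this is a research plan rather than a proof, and the central gap remains exactly where the paper leaves it.

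Concretely: route (i) cannot succeed as described unless you first show that the kernel of $\mathcal{J}_n$ on $\mathcal{E}_n$ is \emph{exactly} the two-sided ideal generated by $E_1E_2T_{12}$; Theorem \ref{JimboPTLn} gives only one containment, and you yourself note that $\mathcal{J}_n$ was built to be non-faithful on $\mathcal{E}_n$, so it may well kill more of ${\rm PTL}_n$ than the zero element --- in that case no choice of test vectors $\xi_{I,F}$ can produce a full-rank matrix. (In the classical case the analogous faithfulness statement is Proposition \ref{kerJ}, which is a genuine theorem requiring its own proof; its analogue here is not established anywhere in the paper.) Route (ii) is the more promising direction, but the construction of the merging rule is precisely the hard content: as you observe, the relations (\ref{ptlF6}) and (\ref{ptlF7}) couple the tie generators to the cup--cap generators through $u$-dependent scalars, and a purely set-theoretic merging rule will generically violate at least one of them, so the structure constants of $\mathcal{D}_n$ must themselves carry powers of $\delta$ and $(u+1)^{-1}$; until such a $\mathcal{D}_n$ is exhibited and its relations verified, no surjection ${\rm PTL}_n\twoheadrightarrow\mathcal{D}_n$ exists and the lower bound is not obtained. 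The Peirce-style fallback has the additional difficulty that the idempotents $E_I$ are not central and not mutually orthogonal (they generate the monoid $\mathsf{P}_n$ under multiplication), so the corners $E_I\,{\rm PTL}_n\,E_I$ do not decompose the algebra as a direct sum and their dimensions cannot simply be added. In short, your diagnosis of what must be proved is accurate, but no step of the proof of linear independence has been completed.
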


\section{Markov trace}
For $d$  a positive integer we denote ${\rm Y}_{d,n}={\rm Y}_{d,n}(u)$ the Yokonuma--Hecke algebra, i.e. the algebra presented  by braid generators $g_1, \ldots , g_{n-1}$ together with the framing generators $t_1, \ldots , t_{n}$ which   satisfies the following defining  relations:
 braids relation (of  type $A$) among the $g_i$'s, $t_it_j=t_jt_i$, 
$g_it_j = t_{s_i(j)}g_i$  and 
$$
g_i^2 = 1 + (u-1) e_i (1  + g_i)
$$
where $e_i$ is defined  as
$$
e_i :=\frac{1}{d}\sum_{s=1}^{d}t_i^st_{i+1}^{-s}
$$
\begin{proposition}
We have a natural algebra morphism $\psi : 
{\mathcal E}_n\mapsto {\rm Y}_{d,n}$ defined through the mapping $T_i\mapsto g_i$ and 
$E_i  \mapsto e_i$.
\end{proposition}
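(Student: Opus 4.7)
The claim is that the assignment $T_i\mapsto g_i$, $E_i\mapsto e_i$ extends to an algebra homomorphism. By the universal property of presentation by generators and relations, this reduces to checking that the images $g_i$ and $e_i$ in $\mathrm{Y}_{d,n}$ satisfy each of the defining relations (\ref{E1})--(\ref{E9}) of $\mathcal{E}_n$. My plan is to go through the nine families in order, relying at each step on standard properties of the framing idempotents $e_i$ that have been established for the Yokonuma--Hecke algebra.

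First, relations (\ref{E1}) and (\ref{E2}) for the $T_i$'s are precisely the braid relations among the $g_i$'s, which are part of the defining presentation of $\mathrm{Y}_{d,n}$. The quadratic relation (\ref{E3}) matches word-for-word the quadratic relation $g_i^2=1+(u-1)e_i(1+g_i)$ in $\mathrm{Y}_{d,n}$ once we put $T_i\mapsto g_i$ and $E_i\mapsto e_i$. So (\ref{E1})--(\ref{E3}) are immediate.

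Next I would verify (\ref{E4}) and (\ref{E5}). Since the $t_j$'s pairwise commute, the expressions $e_i=\frac{1}{d}\sum_s t_i^s t_{i+1}^{-s}$ and $e_j=\frac{1}{d}\sum_r t_j^r t_{j+1}^{-r}$ commute, giving (\ref{E4}). For (\ref{E5}), a direct expansion using $t_i^d=1$ together with the identity $\frac{1}{d}\sum_{s}t_i^{s+r}t_{i+1}^{-s-r}=e_i$ (change of summation variable) gives $e_i^2=e_i$. For (\ref{E6}), if $|i-j|>1$ then $s_j$ fixes both $i$ and $i+1$, so from $g_j t_k=t_{s_j(k)}g_j$ we see that $g_j$ commutes with $t_i$ and $t_{i+1}$, hence with $e_i$. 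Relation (\ref{E7}) follows from the conjugation identity $g_i e_i g_i^{-1}=\frac{1}{d}\sum_s t_{i+1}^{s}t_i^{-s}$ which, after the substitution $s\mapsto -s$ and use of $t_i^d=1$, equals $e_i$ again; since $g_i$ is invertible in $\mathrm{Y}_{d,n}$ (its inverse is computed from (\ref{E3})), this gives $g_ie_i=e_ig_i$.

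The heart of the proof, and the one place I expect real work, lies in (\ref{E8}) and (\ref{E9}). These mixed relations are well known to hold in $\mathrm{Y}_{d,n}$ for the $e_i$'s, but they do need genuine manipulation: one writes each side using the explicit sums defining $e_i$ and $e_{i\pm 1}$, uses repeatedly the conjugation rule $g_i t_j g_i^{-1}=t_{s_i(j)}$ (equivalently $g_it_j=t_{s_i(j)}g_i$), and the fact that conjugation by $g_i$ sends $e_{i+1}=\frac{1}{d}\sum_s t_{i+1}^s t_{i+2}^{-s}$ to $\frac{1}{d}\sum_s t_i^s t_{i+2}^{-s}=e_ie_{i+1}$, which is the key algebraic content behind (\ref{E9}) and the second equality of (\ref{E8}). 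The first equality of (\ref{E8}) is simply (\ref{E7}) combined with (\ref{E4}). All of these computations are routine but somewhat tedious, and they are exactly the identities around which the Yokonuma--Hecke algebra's relation with partitions/framings was originally established; once they are in hand, the universal property yields the desired homomorphism $\psi$.
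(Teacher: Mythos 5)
Your overall strategy coincides with the paper's: the paper disposes of the proposition in one line by citing Lemma~2.1 of \cite{jula3}, which consists precisely of the verification of relations (\ref{E1})--(\ref{E9}) for $g_i$ and $e_i$ that you carry out by hand. Your treatment of (\ref{E1})--(\ref{E7}) and of (\ref{E9}) is correct (note that you are implicitly using the relation $t_i^d=1$, which the paper's quick presentation of ${\rm Y}_{d,n}$ omits but which is part of the standard definition and is indispensable both for $e_i^2=e_i$ and for the substitution $s\mapsto -s$ in your argument for (\ref{E7})).

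However, the one step you yourself identify as the heart of the matter, relation (\ref{E8}), is justified by an identity that is false as stated. Conjugation by $g_i$ does send $e_{i+1}$ to $\frac{1}{d}\sum_s t_i^s t_{i+2}^{-s}$, but this element is \emph{not} equal to $e_ie_{i+1}$: for $d=2$ the former is $\frac{1}{2}\bigl(1+t_1t_3^{-1}\bigr)$ while $e_1e_2=\frac{1}{4}\bigl(1+t_1t_2^{-1}\bigr)\bigl(1+t_2t_3^{-1}\bigr)$, and these differ. The correct statement, and the actual content of (\ref{E8}), is that $e_i\cdot\bigl(g_ie_{i+1}g_i^{-1}\bigr)=e_ie_{i+1}$ (and likewise $e_{i+1}\cdot\bigl(g_ie_{i+1}g_i^{-1}\bigr)=e_ie_{i+1}$), which one checks by reindexing the double sum over the exponents. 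For the same reason the first equality of (\ref{E8}) is not ``simply (\ref{E7}) combined with (\ref{E4})'': relation (\ref{E7}) lets you move $g_i$ past $e_i$, but you must still move it past $e_{i+1}$, and that is exactly where the reindexing identity above is needed. With that identity corrected, your computation goes through and reproduces the content of the lemma the paper cites.
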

\begin{proof}
According  to Lemma 2.1\cite{jula3} the defining relations of ${\mathcal E}_n$ are satisfied by changing $T_i$ by $g_i$ and $E_i$ by $e_i$. Hence the proof follows.
\end{proof}

\begin{theorem}[See \cite{ju}]\label{trace}
Let $z$, $x_1, \ldots , x_{d-1}$ be in ${\Bbb C}$.
There exists a unique family of linear map $ \{{\rm tr}_n\}_n $ on inductive limit associated  to the family $\left\{{\rm Y}_{d,n}\right\}_n$ with
values  in  ${\Bbb C}$ satisfying the rules:
$$
\begin{array}{rcll}
{\rm tr}_n(ab) & = & {\rm tr}_n(ba) & \\
{\rm tr}_n(1) &  = & 1 & \\
{\rm tr}_{n+1}(ag_n) & = & z\,{\rm tr}_n(a) & \qquad {\text for}\quad  a \in {\rm Y}_{d,n} \\
{\rm tr}_{n+1}(at_{n+1}^m) & = & x_m{\rm  tr}_n(a) & \qquad   {\text for} \quad a \in {\rm Y}_{d,n}, \, 1\leq m\leq d-1.
\end{array}
$$
\end{theorem}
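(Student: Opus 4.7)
I would approach this by induction on $n$, constructing $\mathrm{tr}_n$ and the compatibility $\mathrm{tr}_{n+1}|_{{\rm Y}_{d,n}} = \mathrm{tr}_n$ simultaneously. The starting point is a PBW-type basis for ${\rm Y}_{d,n}$: using the commutation $g_i t_j = t_{s_i(j)} g_i$ and a diamond-lemma argument, one verifies that
\begin{equation*}
\bigl\{\, t_1^{a_1}\cdots t_n^{a_n}\, g_w \;:\; 0 \le a_i \le d-1,\ w \in S_n \,\bigr\}
\end{equation*}
is a $\Bbb C$-basis, so $\dim {\rm Y}_{d,n} = d^n n!$. Using the coset decomposition of $S_{n+1}$ modulo $S_n$, every basis element of ${\rm Y}_{d,n+1}$ can be written, after commuting the $t_{n+1}$ factor through the $g$'s, as either $a$, or $a\, t_{n+1}^m$ with $1 \le m \le d-1$ and $a \in {\rm Y}_{d,n}$, or $a\, g_n\, a'$ with $a$ of one of the first two shapes and $a' \in {\rm Y}_{d,n}$. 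The three rules then force the value of $\mathrm{tr}_{n+1}$ on any such element in terms of $\mathrm{tr}_n$, yielding uniqueness.

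For existence, I would \emph{define} $\mathrm{tr}_{n+1}$ on the basis by these formulas and extend $\Bbb C$-linearly; the nontrivial content is (i) well-definedness, that different ways of bringing a basis element to the canonical shape give the same value, and (ii) the trace property $\mathrm{tr}_{n+1}(xy) = \mathrm{tr}_{n+1}(yx)$. Both reduce, by linearity and induction on word length in the generators, to a finite case check: for each defining relation $R = R'$ of ${\rm Y}_{d,n+1}$ one must verify that multiplying both sides by an arbitrary basis element and then applying $\mathrm{tr}_{n+1}$ gives the same scalar, and for cyclicity one must check $\mathrm{tr}(g_i a) = \mathrm{tr}(a g_i)$ and $\mathrm{tr}(t_j^m a) = \mathrm{tr}(a t_j^m)$ on basis elements $a$.

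The main obstacle is the interaction between the Markov rule for $g_n$ and the quadratic relation $g_n^2 = 1 + (u-1) e_n(1 + g_n)$, because $e_n = d^{-1}\sum_{s=1}^d t_n^s t_{n+1}^{-s}$ mixes the framings on strands $n$ and $n+1$. When a word containing $g_n$ is cycled and then reduced using $g_n^2 = \cdots$, framing products $t_n^s t_{n+1}^{-s}$ are introduced, and one must check that the resulting contributions, evaluated via the framing rule $\mathrm{tr}_{n+1}(a\, t_{n+1}^m) = x_m \mathrm{tr}_n(a)$, are compatible on both sides of the relation. A careful organisation of this check (treating separately the effect of cycling past each of $g_n$, $t_{n+1}^m$, and the framing factors inside $e_n$) should show that no constraint is placed on the parameters $z, x_1, \ldots, x_{d-1}$ beyond their being arbitrary complex numbers, which is precisely the feature distinguishing the Yokonuma trace from its Iwahori--Hecke predecessor.
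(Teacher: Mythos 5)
The paper does not prove this statement at all: it is quoted verbatim from the reference \cite{ju} (Juyumaya, \emph{Markov trace on the Yokonuma--Hecke algebra}), so there is no in-paper proof to compare against. Your plan is essentially the argument of that source, which in turn adapts Jones's Ocneanu-trace construction for the Iwahori--Hecke tower: one uses the basis $\{t_1^{a_1}\cdots t_n^{a_n}g_w\}$, the decomposition ${\rm Y}_{d,n+1}=\sum_m {\rm Y}_{d,n}t_{n+1}^m+{\rm Y}_{d,n}g_n{\rm Y}_{d,n}$, and defines ${\rm tr}_{n+1}$ recursively, with uniqueness immediate and existence resting on well-definedness plus the trace property. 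So the strategy is correct and is the standard one. The one place where your outline is thinnest is the step you flag at the end: the whole difficulty concentrates in evaluating products containing two occurrences of $g_n$, i.e.\ in showing that $g_n\,a\,g_n$ for $a\in{\rm Y}_{d,n}$ lands back in ${\rm Y}_{d,n}+{\rm Y}_{d,n}g_n{\rm Y}_{d,n}$ in a way compatible with the already-defined ${\rm tr}_n$; this uses the quadratic relation together with $t_{n+1}^m g_n=g_n t_n^m$, and is where the terms $t_n^s t_{n+1}^{-s}$ from $e_n$ must be tracked against the rule ${\rm tr}_{n+1}(at_{n+1}^m)=x_m{\rm tr}_n(a)$. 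Your claim that no constraint on $z,x_1,\ldots,x_{d-1}$ emerges is consistent with the cited theorem (the famous ``E-condition'' on the $x_m$ arises only later, when one tries to normalise the trace into a link invariant, not for existence of the trace itself). As a referee I would accept the plan as the right skeleton, but the compatibility computation is the actual content of \cite{ju} and would need to be written out.
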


It is natural to consider the composition ${\rm tr}_n\circ \psi$ which  defines a Markov trace on  ${\mathcal E}_n$. This  supports  the following conjecture.
\begin{conjecture}\label{conaj}[Aicardi, Juyumaya]
The algebra ${\mathcal E}_n$ supports a Markov trace. I.e. for all $n\in {\Bbb N}$ we have a unique  linear map $\rho_n :{\mathcal E}_n\longrightarrow K( A, B)$  such that  for all $x, y\in {\mathcal E}_n$, we have:
\begin{enumerate}
\item $\rho_n(1) = 1$
\item $\rho_n(xy) = \rho_n(yx)$
\item $\rho_{n+1}(xT_n) =\rho_{n+1}(xE_nT_n)=A \rho_n(x)$
\item $\rho_{n+1}(xE_n) =B \rho_n(x)$
\end{enumerate}
where $A$ and $B$ are parameters.
\end{conjecture}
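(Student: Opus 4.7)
The plan is to imitate the Jones--Ocneanu inductive construction of a Markov trace, using the basis ${\Bbb S}_n=\{E_IT_w:I\in\mathsf{P}_n,\,w\in S_n\}$ of Theorem~\ref{basEn}. I would build $\{\rho_n\}_n$ by induction on $n$: set $\rho_1(1)=1$ and, assuming $\rho_n$ has been produced on $\mathcal{E}_n$, extend it to $\mathcal{E}_{n+1}$. Using a Matsumoto-style reduction on $w\in S_{n+1}$ together with the conjugation formula $T_vE_JT_v^{-1}=E_{vJ}$, every basis element $E_IT_w\in{\Bbb S}_{n+1}$ can be rewritten as a $K$-linear combination of expressions of the form $y\cdot R_n\cdot y'$ with $y,y'\in\mathcal{E}_n$ and $R_n\in\{1,T_n,E_n,E_nT_n\}$. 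Via the cyclic property (2) (to be proved inductively), one then declares $\rho_{n+1}(yR_ny'):=\rho_{n+1}(y'y\cdot R_n)$, and rules (3)--(4) force this to equal $\rho_n(y'y)$, $A\rho_n(y'y)$, $B\rho_n(y'y)$, or $A\rho_n(y'y)$, respectively.

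Two things then remain to be established. The first is well-definedness: the value assigned to $E_IT_w$ must be independent of the specific decomposition $y\cdot R_n\cdot y'$ chosen. Concretely, one has to check that every defining relation (\ref{E1})--(\ref{E9}) of $\mathcal{E}_{n+1}$ is respected. The purely commutative relations (\ref{E1}), (\ref{E4}), (\ref{E6}), (\ref{E7}) are immediate, while (\ref{E5}), (\ref{E3}), (\ref{E2}) and the mixed relations (\ref{E8})--(\ref{E9}) translate, after moving all $E$'s past $T$'s via the conjugation formula, into identities in $\mathcal{E}_n$ that follow from the inductive hypothesis. The second point is the cyclic trace property (2): by linearity and induction this reduces to checking $\rho_{n+1}(gh)=\rho_{n+1}(hg)$ with $g$ a generator and $h$ a basis element; the cases $g\in\mathcal{E}_n$ are built into the construction, and the remaining cases $g\in\{T_n,E_n\}$ follow from rules (3)--(4) together with the quadratic relation (\ref{E3}) and the commutations $E_nT_n=T_nE_n$, $E_nE_j=E_jE_n$.

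The main obstacle, I expect, is the twin assignment in rule (3): demanding both $\rho_{n+1}(xT_n)=A\rho_n(x)$ and $\rho_{n+1}(xE_nT_n)=A\rho_n(x)$ for every $x\in\mathcal{E}_n$ forces the nontrivial compatibility $\rho_{n+1}\bigl(x(1-E_n)T_n\bigr)=0$. Establishing this identity is where the interplay between $A$, $B$ and $u$ becomes delicate, and it is the step that may impose a relation between the parameters or need to be built into the definition of $\rho_{n+1}$ from the outset. A useful crosscheck is the composition ${\rm tr}_n\circ\psi$ with $\psi:\mathcal{E}_n\to{\rm Y}_{d,n}$ of Theorem~\ref{trace}: in ${\rm Y}_{d,n+1}$ the identity $e_ng_n=g_ne_n$ combined with the Markov rule for $g_n$ makes the twin assignment automatic, and the resulting trace on $\mathcal{E}_n$ satisfies (1)--(4) with $A=z$ and an explicit $B$ built from the $x_m$'s. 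Transporting this consistency back from ${\rm Y}_{d,n}$ to $\mathcal{E}_n$ --- either by taking $d$ large enough that $\psi$ separates the basis ${\Bbb S}_n$, or by running the induction above with the $x_m$'s kept generic so that only the combination $B$ survives --- is in my view the cleanest route to the conjecture.
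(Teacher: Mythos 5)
This statement is presented in the paper as Conjecture~\ref{conaj}, with no proof: the author only motivates it by the composition ${\rm tr}_n\circ\psi$ and then \emph{assumes} it in the final section. So there is no argument of record to compare yours against, and your proposal has to stand on its own; as written it does not close the conjecture. The inductive Jones--Ocneanu scheme is the right starting point, but the two steps you defer are precisely the content of the statement. Well-definedness of $\rho_{n+1}$ on a decomposition $yR_ny'$ is not a routine translation into identities in $\mathcal{E}_n$: after reducing both sides of, say, (\ref{E2}) or (\ref{E9}) to normal form one obtains genuinely different linear combinations of basis elements, and the equality of their traces is a nontrivial computation in which the twin assignment of rule (3) — your identity $\rho_{n+1}\bigl(x(1-E_n)T_n\bigr)=0$ — is exactly the pressure point. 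You correctly flag it, but flagging it is not resolving it, and uniqueness is not addressed at all.

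The fallback via ${\rm tr}_n\circ\psi$ also does not work as claimed. Rule (3) does transport, by the computation $ae_ng_n=\frac{1}{d}\sum_s at_n^s g_n t_n^{-s}$ and cyclicity; but rule (4) fails for generic $x_m$. Take $x=E_{n-1}$, so $\psi(x)=e_{n-1}$ lies in the image of $\psi$. One computes ${\rm tr}_{n+1}(e_{n-1}e_n)=\frac{1}{d^2}\sum_{r,s}x_{d-r}\,x_{r-s}\,x_s$ (indices mod $d$, $x_0:=1$), whereas rule (4) would force this to equal $B^2$ with $B={\rm tr}(e_n)=\frac{1}{d}\sum_r x_{d-r}x_r$. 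For $d=2$ with $x_1=x$ this reads $\frac{1+3x^2}{4}$ versus $\frac{(1+x^2)^2}{4}$, which agree only when $x^2=x^4$. So keeping the $x_m$ generic destroys rule (4) rather than rescuing it, and imposing the polynomial system that restores it (the condition that the $e_i$ behave like scalars under the trace) confines $B$ to a discrete set of values; either way you cannot obtain by pullback the two-parameter trace with $A$ and $B$ independent indeterminates, which is what the conjecture asserts. The honest route is a direct verification that a concrete linear functional defined on the basis $\{E_IT_w\}$ of Theorem~\ref{basEn} satisfies (1)--(4); that verification is the missing proof, not a corollary of Theorem~\ref{trace}.
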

\begin{example}
According to the rule (3)Conjecture\ref{conaj} of $\rho$ we have, $\rho(E_1T_1T_2T_1)= A\rho(E_1T_1^2)$. Now, 
$E_1T_1^2 = E_1 (1 + (u-1)E_1(1 + T_1))=  uE_1 + (u-1) E_1T_1$. So
$$
\rho(E_1T_1T_2T_1)  = A(uB  + (u-1)A) = u AB + (u-1)A^2.
$$
\end{example}

Assuming that  Conjecture \ref{conaj} is true, we are going to study when the Markov trace $\rho_n$ passes to ${\rm PTL}_n$. According to Corollary \ref{PTLquo}, studying    the  factorization  of  $\rho_n$ to ${\rm PTL}_n$ is reduced to  studying the values of $\rho_n$ on the two--sided ideal generated by $E_iE_jT_{12}$. For this study we need
the following lemmas. 
\begin{lemma}\label{TwT12}
\begin{enumerate}
\item $T_1T_{12} = \left[ 1 + (u-1)E_1\right]  T_{12}$
\item $T_2T_{12} = \left[ 1 + (u-1)E_2\right] T_{12}$
\item $T_1T_2T_{12} = \left[ 1  + (u-1)E_1  + (u-1)E_{1,3}  + (u-1)^2 E_1E_2\right]T_{12}$
\item $T_2T_1T_{12} =\left[ 1 + (u-1)E_2 + (u-1)E_{1,3} + (u-1)^2 E_1E_2\right]T_{12}$
\item $T_1T_2T_1 T_{12}=\left[ 1 + (u-1)(E_1 + E_2 + E_{1,3}) + (u-1)^2(u+2)E_1E_2\right]T_{12}$
\end{enumerate}
\end{lemma}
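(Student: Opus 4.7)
The plan is to establish (1)--(5) iteratively, each deduced from its predecessors by left multiplication by a single $T_i$ and regrouping via the quadratic relation (\ref{E3}), the commutation identities (\ref{E6})--(\ref{E9}), and the conjugation formula $T_w E_I T_w^{-1} = E_{wI}$ established earlier.

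For (1) I would expand $T_1\cdot T_{12}$ term by term. The three summands in which a $T_1^2$ appears become, via (\ref{E3}), $\bigl[1 + (u-1)E_1 + (u-1)E_1 T_1\bigr]$ times the surviving tail. The ``$1$''-part reassembles $T_{12}$, while the ``$(u-1)E_1$''-part slides freely across every factor using (\ref{E6}) and (\ref{E7}), and collects as $(u-1)E_1\,T_{12}$. Statement (2) is produced by the identical computation with the indices swapped; its legitimacy rests on the symmetry $T_{12}=T_{21}$ encoded in the braid relation $T_1 T_2 T_1 = T_2 T_1 T_2$.

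For (3) I would write $T_1 T_2 T_{12} = T_1(T_2 T_{12})$ and substitute (2), reducing the task to $T_1 T_{12} + (u-1)\,T_1 E_2 T_{12}$. The first term is (1). For the second, the definition of $E_{ij}$ gives $T_1 E_2 = E_{\{1,3\}} T_1$, so $T_1 E_2 T_{12} = E_{\{1,3\}}\bigl[1+(u-1)E_1\bigr]T_{12}$ by (1) again. The crucial collapse $E_{\{1,3\}} E_1 = E_1 E_2$ is read off from the monoid embedding $\langle E_i\rangle\simeq\mathsf{P}_n$: both elements represent the partition $\{\{1,2,3\}\}$. Statement (4) is the mirror-image argument, starting from (1) and using $T_2 E_1 = E_{\{1,3\}} T_2$.

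For (5) I would apply $T_1\cdot$ to (4), writing $T_1 T_2 T_1 T_{12} = T_1(T_2 T_1 T_{12})$. Four summands result, each resolved by (i) a single further application of (1), (ii) the conjugation $T_1 E_{\{1,3\}} T_1^{-1} = E_{s_1\{1,3\}} = E_{\{2,3\}} = E_2$, and (iii) the partition-monoid identifications $E_{\{1,3\}} E_i = E_1 E_2$ for $i\in\{1,2\}$. The main obstacle is strictly the bookkeeping: after all substitutions one must check that the coefficients of $E_1$, $E_2$, $E_{\{1,3\}}$ and $E_1 E_2$ collect precisely to $(u-1)$, $(u-1)$, $(u-1)$, $(u-1)^2(u+2)$, the last coefficient being the only one that receives contributions from several distinct substitution paths, and the factor $u+2$ arising through one further use of the quadratic relation against a pre-existing $(u-1)^2 E_1 E_2$ term.
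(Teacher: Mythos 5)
Your argument is correct and, for item (1), coincides with the paper's computation: the paper expands $T_1T_{12}$ into six words, applies the quadratic relation (\ref{E3}) to the three words containing $T_1^2$, and regroups the result as $T_{12}+(u-1)E_1T_{12}$ --- exactly your step. The paper proves only (1) explicitly and asserts that (2)--(5) follow by the same kind of expansion, whereas you bootstrap each item from its predecessors via $T_1E_2=E_{\{1,3\}}T_1$, $T_2E_1=E_{\{1,3\}}T_2$, $T_1E_{\{1,3\}}=E_2T_1$ and the monoid identifications $E_{\{1,3\}}E_i=E_1E_2$; this is more economical (item (5) reduces to four summands rather than a direct expansion of $T_1T_2T_1$ against all six words of $T_{12}$) and makes transparent where the coefficient $(u-1)^2(u+2)$ of $E_1E_2$ comes from, namely three contributions of $(u-1)^2$ plus one $(u-1)^3$ arising from $E_1E_2E_1=E_1E_2$. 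One small correction to your justification of (1): the terms $(u-1)E_1w$ produced by (\ref{E3}) already carry $E_1$ on the left, so no commutation is needed to collect them into $(u-1)E_1T_{12}$; in fact $E_1$ does \emph{not} commute with $T_2$ (relation (\ref{E6}) requires $\vert i-j\vert>1$), so ``sliding $E_1$ across every factor'' is neither possible nor necessary --- fortunately your conclusion does not depend on it.
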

\begin{proof}
The proof of the statements results by expanding the  left side and then  using  the defining relations of ${\mathcal E}_n$. As example we shall check the first statement:
\begin{eqnarray*}
T_1 T_{12} & = & T_1 + T_1^2 + T_1T_2 + T_1^2T_2 + T_1T_2T_1 + T_1^2 T_2T_1\\
& = & 
T_1 + 1 + (u-1)E_1 +(u-1)E_1T_1 +T_1T_2+ T_2 \\
&\,  & 
+  (u-1)E_1T_2 +(u-1)E_1T_1T_2 + T_1T_2T_1 + T_2T_1\\
&\,  & 
+ (u-1)E_1T_2T_1 + (u-1)E_1T_1T_2T_1 =
T_{12} + (u-1)E_1T_{12}.
\end{eqnarray*}

\end{proof}

\begin{lemma}\label{rhoEI}
\begin{enumerate}
\item $\rho_3(T_{12}) = (u+1)A^2+3A +(u-1)AB + 1$
\item $\rho_3(E_{\{1,2,3\}}T_{12}) = (u+1)A^2+(u+2)AB + B^2$
\item  $\rho_3(E_IT_{12}) =  (u+1)A^2 +( u+1) AB + A + B$, for all $I\in \mathsf{P}(3)$ of cardinal $2$.
\end{enumerate}
\end{lemma}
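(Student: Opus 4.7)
The plan is to expand
\[
T_{12} = 1 + T_1 + T_2 + T_1T_2 + T_2T_1 + T_1T_2T_1,
\]
left-multiply by the appropriate $E_I$, and compute $\rho_3$ of each of the six resulting summands separately. The only tools I use are the four Markov-trace rules of Conjecture~\ref{conaj}, cyclicity of the trace, the compatibility $\rho_n|_{{\mathcal E}_{n-1}} = \rho_{n-1}$ (forced by the uniqueness clause of the conjecture), the quadratic relation (\ref{E3}), the commutation $E_iT_i = T_iE_i$ from (\ref{E7}), and the crossover relations (\ref{E8}) and (\ref{E9}). The recurring move is to bring a $T_2$ or an $E_2$ to the rightmost position so that rule (3) or (4) applies; whenever a $T_1^2$ appears, I rewrite it by (\ref{E3}) as $1 + (u-1)E_1 + (u-1)E_1T_1$ and iterate.

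For statement (1), the linear summands contribute $1 + 2A$; rule (3) gives $\rho_3(T_1T_2) = \rho_3(T_2T_1) = A^2$; and cyclicity reduces the cubic term to $\rho_3(T_1^2T_2)$, whose expansion via (\ref{E3}) is $\rho_3(T_2) + (u-1)\rho_3(E_1T_2) + (u-1)\rho_3(E_1T_1T_2) = A + (u-1)AB + (u-1)A^2$. Collecting yields the claimed value.

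For statement (2), the key identities derived from (\ref{E8}) and (\ref{E9}) are
\[
E_1E_2T_1 = E_2T_1E_2, \qquad E_2T_1T_2 = T_1T_2E_1, \qquad E_1T_2T_1 = T_2T_1E_2,
\]
together with $E_i^2 = E_i$ and $E_2T_2 = T_2E_2$. These, combined with cyclicity, convert each of the first five summands of $E_1E_2 T_{12}$ into a trace of the form $\rho_3(\cdots E_2)$ or $\rho_3(\cdots T_2)$ with the remainder in ${\mathcal E}_2$, evaluable directly by rule~(3) or~(4). The cubic summand $E_1E_2T_1T_2T_1$ is handled in the same spirit: applying (\ref{E8}) and (\ref{E9}) and then cyclicity produces a $T_1^2$ inside the trace, which (\ref{E3}) reduces to already-computed quantities; the resulting values sum to $(u+1)A^2 + (u+2)AB + B^2$.

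For statement (3) there are three subcases corresponding to the three partitions of $\{1,2,3\}$ of cardinality two, namely $E_I \in \{E_1,\,E_2,\,E_{13}\}$ with $E_{13} = T_1E_2T_1^{-1}$. The cases $E_1$ and $E_2$ use the same method as statement (2), but the absence of a second idempotent factor makes the crossover steps simpler. For $E_{13}$ my plan is to exploit the two equivalent expressions $E_{13} = T_1E_2T_1^{-1} = T_2E_1T_2^{-1}$, which yield the very convenient identities $E_{13}T_1 = T_1E_2$ and $E_{13}T_2 = T_2E_1$; each summand of $E_{13}T_{12}$ then becomes a product of a single $E_i$ with a pure word in the $T_j$'s, and the cubic term reduces by cyclicity to $\rho_3(T_1^2T_2E_2)$, handled by (\ref{E3}). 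The main obstacle I foresee is bookkeeping: one must select the correct crossover relation at each step so that every trace collapses to a base value supplied by rules (3) or (4), and finally verify (as a built-in consistency check) that the three subcases in (3) all produce the same scalar $(u+1)A^2 + (u+1)AB + A + B$.
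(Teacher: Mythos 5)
Your proposal is correct and follows essentially the same route as the paper's proof: expand $T_{12}$ into its six summands, then evaluate each term with the Markov trace rules, cyclicity, the quadratic relation (\ref{E3}), and the crossover relations (\ref{E8})--(\ref{E9}); the paper merely states this is routine and works out only the case $E_I=E_1$ via its preceding example $\rho(E_1T_1T_2T_1)=uAB+(u-1)A^2$. Your worked values (e.g.\ $\rho_3(T_1^2T_2)=A+(u-1)AB+(u-1)A^2$ and the identities $E_{13}T_1=T_1E_2$, $E_{13}T_2=T_2E_1$) check out and all three subcases of (3) do collapse to the same scalar.
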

\begin{proof}
The proof is only a routine of computations. We shall prove, as an   example, the third claim. Suppose $I=\{\{1,2\},\{3\}\}$, hence  $E_I = E_1$. Then, by linearity and  using the  example above we have
$$
\rho_3(E_IT_{12})  = B + A + AB + A^2 + A^2 + u AB + (u-1)A^2
$$
Hence we have proved the claim.
\end{proof}
\begin{theorem}
The Markov trace ${\rho}_n:{\mathcal E}_n \longrightarrow K( A, B)$ passes to ${\rm PTL}_n$ if only if $A= -B$ or $A=- B/(1+u)$.
\end{theorem}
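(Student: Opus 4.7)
The proof splits into necessity and sufficiency, both hinging on Lemma \ref{rhoEI}(2).

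\emph{Necessity.} If $\rho_n$ factors through ${\rm PTL}_n$, then since $E_1E_2T_{1,2}$ generates the kernel (Corollary \ref{PTLquo}) one must have $\rho_n(E_1E_2T_{1,2})=0$. Lemma \ref{rhoEI}(2) gives
\[
(u+1)A^2+(u+2)AB+B^2 \;=\; (A+B)\bigl((u+1)A+B\bigr) \;=\; 0,
\]
which forces $A=-B$ or $A=-B/(1+u)$.

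\emph{Sufficiency.} Assume $(A+B)\bigl((u+1)A+B\bigr)=0$. By Corollary \ref{PTLquo} and trace cyclicity, it is enough to show $\rho_n(c\cdot E_1E_2T_{1,2})=0$ for every $c\in\mathcal{E}_n$. I would argue this by induction on $n$, starting at $n=3$.

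For the base case I would expand $c=E_IT_w$ in the basis ${\Bbb S}_3$. Since every $w\in S_3$ preserves $\{1,2,3\}$ setwise, $T_w\,E_1E_2=E_1E_2\,T_w$, and the identity $E_I\cdot E_{\{1,2,3\}}=E_{\{1,2,3\}}$ (top of $\mathsf{P}_3$) combined with Lemma \ref{TwT12} — which writes $T_wT_{1,2}=P_w\,T_{1,2}$ for a polynomial $P_w$ in the $E_i$'s — collapses everything to a scalar multiple of $E_{\{1,2,3\}}T_{1,2}$:
\[
c\cdot E_1E_2T_{1,2} \;=\; p_w\cdot E_{\{1,2,3\}}T_{1,2},
\]
so $\rho_3(c\cdot E_1E_2T_{1,2})=p_w\cdot\rho_3(E_{\{1,2,3\}}T_{1,2})=0$ by hypothesis. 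Summing over the basis finishes $n=3$.

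For $n\geq 4$, set $X:=E_1E_2T_{1,2}\in\mathcal{E}_3\subset\mathcal{E}_{n-1}$. Using the definition of $E_{k,n}$ and the minimal coset decomposition $T_w=T_{w'}\,T_{n-1}T_{n-2}\cdots T_i$ (with $w'\in S_{n-1}$), every basis element $c=E_IT_w$ is either already in $\mathcal{E}_{n-1}$ or can be rewritten as $aZb$ with $a,b\in\mathcal{E}_{n-1}$ and $Z\in\{T_{n-1},\,E_{n-1},\,E_{n-1}T_{n-1}\}$. In the first case, compatibility of the Markov trace with the inductive limit gives $\rho_n(cX)=\rho_{n-1}(cX)=0$ by induction, since $cX\in I_{n-1}$. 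In the second case, trace cyclicity and the Markov rules of Conjecture \ref{conaj} yield
\[
\rho_n(aZb\cdot X) \;=\; \rho_n\bigl((bXa)\cdot Z\bigr) \;=\; \lambda(Z)\cdot\rho_{n-1}(abX),
\]
with $\lambda(T_{n-1})=\lambda(E_{n-1}T_{n-1})=A$ and $\lambda(E_{n-1})=B$, and the inductive hypothesis applied to $abX\in I_{n-1}$ finishes the case.

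\emph{Main obstacle.} The only delicate point is justifying the decomposition $c=aZb$ with a single ``last-generator'' factor $Z$: when $I$ has $n$ in a non-singleton block \emph{and} $w$ uses $s_{n-1}$, one must use the mixed relations \eqref{E7}--\eqref{E9} to coalesce the $E_{n-1}$ extracted from $E_{k,n}$ with the $T_{n-1}$ coming from the coset factor of $T_w$, conjugating the remaining $T_j$'s into $\mathcal{E}_{n-1}$. This is routine but somewhat careful bookkeeping, and is the place where the technical input of Section 2 (Ryom--Hansen's description of the $E_I$'s) is really used.
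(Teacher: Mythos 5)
Your proposal is correct and follows essentially the same route as the paper: reduce via the Markov rules and trace cyclicity to the case $n=3$, use Lemma \ref{TwT12} together with the fact that $E_1E_2=E_{\{1,2,3\}}$ is maximal in $\mathsf{P}_3$ to collapse $zE_1E_2T_{1,2}$ to a scalar multiple of $E_1E_2T_{1,2}$, and then apply Lemma \ref{rhoEI}(2) to factor $(u+1)A^2+(u+2)AB+B^2=(A+B)((u+1)A+B)$. Your explicit induction with the $aZb$ decomposition merely spells out the reduction that the paper states in one sentence.
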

\begin{proof}
From Corollary \ref{PTLquo} we have that ${\rho}_n$ pass to ${\rm PTL}_n$ if only if 
${\rho}_n(xE_1E_2T_{12}y) = 0$, for all $x,y\in {\mathcal E}_n$. Now,  by linearity  and  trace properties of ${\rho}_n$ follows that it is enough to study the conditions to have  ${\rho}_n(xE_1E_2T_{12}) = 0$, for all $x$ in  a linear basis of ${\mathcal E}_n$. We consider now the basis ${\Bbb S}_n$ of  ${\mathcal E}_n$, see Theorem \ref{basEn}. Using the rules that define ${\rho}_n$ we deduce that the 
computation of ${\rho}_n(xE_1E_2T_{12})$, for $x\in {\Bbb S}_n$, results   in 
a $K(A, B)$--linear combination of $\rho_3 (zE_1E_2T_{12})$ with $z\in {\Bbb S}_3$.  Now, $z$ is of the form $E_IT_w$, with $w\in S_3$ and $I\in \mathsf{P}({\bf 3})$;   since  $T_w$ commutes with $E_1E_2$  having in mind the Lemma \ref{TwT12} and the fact that $E_1E_2$ is the maxim element of  $\mathsf{P}({\bf 3}) $,  we obtain  that $z E_1E_2 T_{12}$ is a $K$--scalar multiple of $E_1E_2T_{12}$. 
Therefore, ${\rho}_n(xE_1E_2T_{12}y) = 0$, for all $x,y\in {\mathcal E}_n$ is equivalent to have $\rho_3( E_1E_2T_{12}) =0$. Now, from (2)Lemma \ref{rhoEI}, we  have  $\rho(E_1E_2T_{12})= 0$ is equivalent to $(u+1)A^2 + (u+2)AB + B^2=0$, then $A= -B$ or $A=- B/(1+u)$.

\end{proof}


\begin{thebibliography}{25}

\bibitem{aj} F. Aicardi,  J. Juyumaya, {\it An algebra involving braids and ties}, ictp preprint IC2000179, http://streaming.ictp.trieste.it/preprints/P/00/179.pdf.

\bibitem{ba} E. O. Banjo,  {\it The generic representation theory of the Juyumaya algebra of braids and ties}. Algebr. Represent. Theor. 
DOI 10.1007/s10468-012-9361-3

\bibitem{chda} M. Chlouveraki, L. P. D'Andecy, {\it 
Representation theory of the Yokonuma--Hecke algebra}, arXiv:1302.6225v1.

\bibitem{fan} C. K. Fan, {\it A Hecke algebra quotient and some combinatorial applications}, J. Algebr. Comb. {\bf 5},  (1996), 175--189.

\bibitem{gohajo} F. M. Goodman, P. de la Harpe, V.F.R. Jones, {\it Coxeter Graphs and Towers of Algebras}, Springer--Verlag (1989).

\bibitem{gojula} D. Goundaroulis, J. Juyumaya, S. Lambropoulou, {\it Yokonuma--Temperley--Lieb algebras.} arXiv: 1012.1557.

\bibitem{ji} M. Jimbo, {\it A $q$--analogue of $U(gl(N+1))$, Hecke algebra, and the Yang--Baxter equation}, Lett. in Math. Phys. {\bf 11} (1986) 247--252.


\bibitem{jo83} V.F.R. Jones, {\it Index for subfactors}, Invent. Math. {\bf 72} (1983), 1--25.

\bibitem{jo} V.F.R. Jones, {\em Hecke algebra representations of braid groups and link polynomials}, Ann. Math. {\bf 126} (1987), 335--388.

\bibitem{joPA} V.F.R. Jones, {\em The Potts models and the symmetric group}, in Subfactors: Proceedings of the Taniguchi Symposium on Operators Algebras, Kyuzeso, 1993, 259--267. World Scientific, River Edge (1994).



\bibitem{ju} J. Juyumaya, {\it Markov trace on the Yokonuma--Hecke algebra},
J. Knot Theory  Ramif. {\bf 13} (2004), 25--39.

\bibitem{jula3} J. Juyumaya, S. Lambropoulou, {\it An invariant for singular knots}, J. Knot Theory  Ramif. , {\bf 18} (2009) 825--840.

\bibitem{mar1} P.P. Martin,{\it Temperley--Lieb algebras for non--planar statistical mechanics--the partition algebra construction}, J. Knot Theory  Ramif.  {\bf 3} (1994), 51--82.

\bibitem{ry} S. Ryom--Hansen, {\it On the Representation Theory of an Algebra of Braids and Ties},   J. Algebr. Comb. {\bf 33} (2011),  57--79.

\bibitem{tl} H.N.V. Temperley, E. H. Lieb, {\it Relations between the \lq percolation\rq\ and \lq couloring\rq\  problem and other graph--theoretical problem associated with regular planar lattice: some exact results for the \lq percolations problems\rq}, Proc. Roy. Soc. London Ser. A 322 (1971), 251--280.

\bibitem{th} N. Thiem,
{\it Unipotent Hecke algebras of
$GL_n({\Bbb F}_q)$}, J. of Algebra,
{\bf 284} (2005) 559–577.

\bibitem{yo} T. Yokonuma,
{\it Sur la structure des anneaux de Hecke d’un groupe de Chevalle
y fini}, C.R. Acad. Sc. Paris, 264 (1967) 344–347.


\end{thebibliography}
\end{document}